\pdfoutput=1

\documentclass[a4paper,UKenglish]{amsart}

\usepackage[utf8]{inputenc}
\usepackage[T1]{fontenc}
\usepackage{lmodern}
\usepackage{amssymb}
\usepackage{mathtools}
\usepackage{tikz-cd}
\usepackage{babel}
\usepackage{microtype}
\usepackage{hyperref}

\DeclareFontFamily{OMX}{lmex}{}
\DeclareFontShape{OMX}{lmex}{m}{n}{<->lmex10}{}

\tikzcdset{arrow style=Latin Modern}

\relpenalty=10000
\binoppenalty=10000

\theoremstyle{plain}
\newtheorem{prop}{Proposition}
\newtheorem{coro}[prop]{Corollary}
\newtheorem{lemm}[prop]{Lemma}

\theoremstyle{remark}
\newtheorem{exem}[prop]{Example}

\DeclareMathOperator{\Hom}{Hom}
\DeclareMathOperator{\End}{End}
\DeclareMathOperator{\Ext}{Ext}
\DeclareMathOperator{\Mod}{Mod}
\DeclareMathOperator{\Fil}{Fil}
\DeclareMathOperator{\Gr}{Gr}
\DeclareMathOperator{\Ind}{Ind}
\DeclareMathOperator{\cind}{c-ind}
\DeclareMathOperator{\St}{St}
\DeclareMathOperator{\Ord}{Ord}
\DeclareMathOperator{\Art}{Art}
\DeclareMathOperator{\Noe}{Noe}
\DeclareMathOperator{\Pro}{Pro}
\DeclareMathOperator{\Def}{Def}
\DeclareMathOperator{\card}{card}
\DeclareMathOperator{\car}{char}
\DeclareMathOperator{\e}{e}

\newcommand{\iso}{\xrightarrow{\sim}}
\newcommand{\vertiso}{\rotatebox{-90}{\(\sim\)}}
\newcommand{\middlevert}{\;\middle|\;}
\newcommand{\otimesh}{\mathbin{\widehat{\otimes}}}
\newcommand{\Set}{\mathrm{Set}}
\newcommand{\Grp}{\mathrm{Grp}}
\newcommand{\GL}{\mathrm{GL}}
\newcommand{\Hr}{\mathrm{H}}
\newcommand{\HOrd}[1][\bullet]{\Hr^{#1}\!\Ord}
\newcommand{\cont}{\mathrm{cont}}
\newcommand{\univ}{\mathrm{univ}}
\newcommand{\adm}{\mathrm{adm}}
\newcommand{\lfin}{\mathrm{l.fin}}
\newcommand{\ord}{\mathrm{ord}}
\newcommand{\der}{\mathrm{der}}
\newcommand{\ab}{\mathrm{ab}}
\newcommand{\smc}{\mathrm{sc}}
\newcommand{\N}{\mathbb{N}}
\newcommand{\Z}{\mathbb{Z}}
\newcommand{\Q}{\mathbb{Q}}
\newcommand{\F}{\mathbb{F}}
\newcommand{\Bc}{\mathcal{B}}
\newcommand{\Oc}{\mathcal{O}}
\newcommand{\mf}{\mathfrak{m}}
\newcommand{\Gb}{\mathbf{G}}
\newcommand{\Bb}{\mathbf{B}}
\newcommand{\Sb}{\mathbf{S}}
\newcommand{\Pb}{\mathbf{P}}
\newcommand{\Lb}{\mathbf{L}}
\newcommand{\Ub}{\mathbf{U}}
\newcommand{\Qb}{\mathbf{Q}}
\newcommand{\Mb}{\mathbf{M}}
\newcommand{\Nb}{\mathbf{N}}
\newcommand{\Zb}{\mathbf{Z}}
\newcommand{\Gbt}{\widetilde{\Gb}}
\newcommand{\Gt}{\widetilde{G}}
\newcommand{\GU}{\langle\prescript{G}{}{U}\rangle}

\hyphenation{Steinberg Abe Henniart Herzig Vignéras Weyl Borel Knesser Tits Platonov Yoneda Iwasawa}

\hypersetup{pdfinfo={
	Title={Functorial properties of generalised Steinberg representations},
	Author={Julien Hauseux, Tobias Schmidt, Claus Sorensen},
	Subject={22E50},
	Keywords={local fields, reductive groups, generalised Steinberg representations, extensions, deformations}
}}

\title{Functorial properties of generalised Steinberg representations}
\subjclass[2010]{Primary 22E50; Secondary 11F70}
\keywords{Local fields, reductive groups, generalised Steinberg representations, extensions, deformations}

\author[J.~Hauseux]{Julien Hauseux}
\address{
Université de Lille\\
Département de Mathématiques\\
Cité scientifique, Bâtiment M2\\
59655 Villeneuve d'Ascq Cedex\\
France
}
\email{\href{mailto:julien.hauseux@math.univ-lille1.fr}{julien.hauseux@math.univ-lille1.fr}}
\thanks{J.H. was partly supported by EPSRC grant EP/L025302/1.}

\author[T.~Schmidt]{Tobias Schmidt}
\address{
Institut de Recherche Mathématique de Rennes\\
Campus Beaulieu\\
35042 Rennes Cedex\\
France 
}
\email{\href{mailto:tobias.schmidt@univ-rennes1.fr}{tobias.schmidt@univ-rennes1.fr}}

\author[C.~Sorensen]{Claus Sorensen}
\address{
Department of Mathematics, UCSD\\
9500 Gilman Dr.\@ \#0112\\
La Jolla, CA 92093-0112\\
USA
}
\email{\href{mailto:csorensen@ucsd.edu}{csorensen@ucsd.edu}}

\makeatletter
\def\@@and{\unskip}
\makeatother

\begin{document}

\begin{abstract}
Let $G$ be the $F$-points of a connected reductive group over a non-archimedean local field $F$ of residue characteristic $p$ and $R$ be a commutative ring.
Let $P=LU$ be a parabolic subgroup of $G$ and $Q$ be a parabolic subgroup of $G$ containing $P$.
We study the functor $\mathrm{St}_Q^G$ taking a smooth $R$-representation $\sigma$ of $L$ which extends to a representation $\mathrm{e}_G(\sigma)$ of $G$ trivial on $U$ to the smooth $R$-representation $\mathrm{e}_G(\sigma) \otimes_R \mathrm{St}_Q^G(R)$ of $G$ where $\mathrm{St}_Q^G(R)$ is the generalised Steinberg representation.
\end{abstract}

\maketitle

\section{Introduction}

Let $F$ be a non-archimedean local field of residue characteristic $p$.
Let $\Gb$ be a connected reductive algebraic $F$-group and $G$ denote the topological group $\Gb(F)$.
In a recent paper (\cite{AHHV}), Abe, Henniart, Herzig, and Vignéras have classified the irreducible admissible smooth representations of $G$ over an algebraically closed field of characteristic $p$ in terms of supersingular representations of Levi subgroups.
There are two functorial steps in the construction of the irreducible representations.
In this article, we study the behaviour of extensions and deformations under the first step (extension to a larger parabolic subgroup and twist by a generalised Steinberg representation).
For the second step (parabolic induction), this has been done in \cite{JHD,HSS} when $\car(F)=0$ and \cite{JHE} when $\car(F)=p$.

Let $R$ be a commutative ring.
We write $\Mod_G^\infty(R)$ for the category of smooth $R$-representations of $G$ (i.e.\@ $R[G]$-modules $\pi$ such that for all $v \in \pi$ the stabiliser of $v$ is open in $G$) and $R[G]$-linear maps.
It is an $R$-linear abelian category.
When $R$ is noetherian, we write $\Mod_G^\adm(R)$ for the full subcategory of $\Mod_G^\infty(R)$ consisting of admissible representations (i.e.\@ those representations $\pi$ such that $\pi^H$ is finitely generated over $R$ for any open subgroup $H$ of $G$).
It is closed under passing to subrepresentations and extensions, thus it is an $R$-linear exact subcategory, but quotients of admissible representations may not be admissible when $\car(F)=p$ (see \cite[Example 4.4]{AHV}).

We fix a parabolic subgroup $\Pb=\Lb\Ub$ of $\Gb$ and we let $\GU$ denote the normal subgroup of $G$ generated by $U$.
A smooth $R$-representation $\sigma$ of $L$ extends to a smooth representation $\e_G(\sigma)$ of $G$ trivial on $U$ if and only if it is trivial on $L \cap \GU$, in which case this extension is unique (\cite[§~II]{AHHV}).
We fix a parabolic subgroup $\Qb=\Mb\Nb$ of $\Gb$ such that $\Pb \subseteq \Qb$ and $\Lb \subseteq \Mb$, and we write $\bar \Qb=\Mb\bar \Nb$ for the opposite parabolic subgroup.
We let $\St_{\bar Q}^G(R)$ denote the generalised Steinberg representation of $G$ over $R$ relative to $\bar Q$ (\cite{GK} when $\Gb$ is split, \cite{LySt} in general).
We obtain an $R$-linear exact functor $\St_{\bar Q}^G : \Mod_{L/(L \cap \GU)}^\infty(R) \to \Mod_G^\infty(R)$ which commutes with small direct sums by setting $\St_{\bar Q}^G(\sigma) \coloneqq \e_G(\sigma) \otimes_R \St_{\bar Q}^G(R)$.

\subsection*{Results}

When $R$ is noetherian and $p$ is nilpotent in $R$, $\St_{\bar Q}^G$ respects admissibility (\cite[Theorem 4.21]{AHV}) and we prove that its restriction to admissible representations is fully faithful (Corollary \ref{coro:StFF}).
When $R$ is artinian and $p$ is nilpotent in $R$, we prove that $\St_{\bar Q}^G$ induces an isomorphism between the $R$-modules of Yoneda extensions (Proposition \ref{prop:Ext}).
We also extend the definition of $\St_{\bar Q}^G$ to $I$-adically continuous $R$-representations where $I$ is a finitely generated ideal of $R$.
When $R/I$ is noetherian and $p$ is nilpotent in $R/I$, $\St_{\bar Q}^G$ respects admissibility and we prove that its restriction to admissible representations is fully faithful (Proposition \ref{prop:StIFF}).
Finally, we prove that $\St_{\bar Q}^G$ induces bijections between certain sets of deformations (Propositions \ref{prop:StDef} and \ref{prop:StIDef}) and we deduce some results on universal deformation rings and universal deformations (Corollaries \ref{coro:pro-rep}, \ref{coro:Noe}, and \ref{coro:Lambda}).

\subsection*{Notations}

We keep the notations of the introduction.
We fix a minimal parabolic subgroup $\Bb \subseteq \Gb$ contained in $\Pb$ and a maximal split torus $\Sb \subseteq \Bb$ contained in $\Lb$.
We let $W$ (resp.\@ $W_\Mb$) be the Weyl group of $(\Gb,\Sb)$ (resp.\@ $(\Mb,\Sb)$) and $\Delta$ (resp.\@ $\Delta_\Mb$) be the set of simple roots of $(\Gb,\Bb,\Sb)$ (resp.\@ $(\Gb,\Mb' \cap \Bb,\Sb)$).
We let $\Zb_\Mb$ denote the centre of $\Mb$.
We write $\Mod_M^{\infty,Z_M-\lfin}(R)$ for the full subcategory of $\Mod_M^\infty(R)$ consisting of locally $Z_M$-finite representations (i.e.\@ those representations $\sigma$ such that for all $v \in \sigma$, $R[Z_M] \cdot v$ is contained in a finitely generated $R$-submodule).
Given an $R$-module $\sigma$, we set $\sigma_{p-\ord} \coloneqq \bigcap_{n \in \N} p^n\sigma$.

\section{\texorpdfstring{Extension from $L$ to $G$}{Extension from M to G}}

We do not make any assumption on $R$.
We recall the description of $\GU$ from \cite[§~II]{AHHV}.
Let $\iota : \Gb^\smc \twoheadrightarrow \Gb^\der \hookrightarrow \Gb$ be the simply connected cover of the derived subgroup of $\Gb$.
Recall that $\Gb^\smc$ is the direct product of its almost-simple components.
We let $\Bc$ be an indexing set for the isotropic almost-simple components of $\Gb^\smc$ and for $b \in \Bc$ we write $\Gbt_b$ for the corresponding component.
We let $\Bc(\Pb) \subseteq \Bc$ denote the subset consisting of those elements $b$ such that $\Gbt_b \not \subseteq \iota^{-1}(\Pb)$.
By \cite[II.5 Proposition]{AHHV}, we have $\GU = \iota(\prod_{b \in \Bc(\Pb)} \Gt_b)$.

\begin{lemm} \label{lemm:perf}
The group $\GU$ is perfect.
\end{lemm}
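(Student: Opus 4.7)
The plan is to use the description $\GU = \iota\bigl(\prod_{b \in \Bc(\Pb)} \Gt_b\bigr)$ recalled just before the statement. Since a direct product of perfect groups is perfect and the image of a perfect group under a group homomorphism is perfect, it is enough to show that $\Gt_b$ is perfect for each $b \in \Bc(\Pb)$.

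Fix such a $b$; the algebraic group $\Gbt_b$ is then simply connected, almost-simple and isotropic over the non-archimedean local field $F$. By the Kneser--Tits theorem --- which is a theorem in this setting; see e.g.\@ Platonov--Rapinchuk, \emph{Algebraic groups and number theory}, \S~7.2 --- the group $\Gt_b$ coincides with the subgroup $\Gt_b^+$ generated by the $F$-points of the unipotent radicals of the proper parabolic $F$-subgroups of $\Gbt_b$. In particular, $\Gt_b$ is generated by the relative root subgroups $\Ub_\alpha(F)$ attached to a maximal split $F$-torus of $\Gbt_b$.

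It therefore suffices to show that each $\Ub_\alpha(F)$ is contained in $[\Gt_b, \Gt_b]$. Using the associated rank-one semi-simple $F$-subgroup and the fact that $F$ is infinite, I would choose an element $s$ of a one-dimensional split $F$-torus in this subgroup such that $\alpha(s)$ has infinite multiplicative order. The commutator map $u \mapsto [s, u]$ then stabilises the descending filtration of $\Ub_\alpha(F)$ by the $\Ub_{k\alpha}(F)$ and acts on the graded piece indexed by $k$ by multiplication by $\alpha(s)^k - 1$, which is a unit; it is therefore a bijection of $\Ub_\alpha(F)$, yielding $\Ub_\alpha(F) \subseteq [\Gt_b, \Gt_b]$.

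The main obstacle is this last step when $\Gbt_b$ is not quasi-split: then $\Ub_\alpha$ may be non-abelian, and one has to verify that the filtration by the $\Ub_{k\alpha}(F)$ is $s$-stable and that the $s$-action on its graded pieces is by the claimed characters. This is however classical material (Borel--Tits), so the argument goes through uniformly.
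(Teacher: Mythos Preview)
Your argument follows the same overall strategy as the paper's: reduce via the description $\GU=\iota(\prod_{b\in\Bc(\Pb)}\Gt_b)$ to showing each $\Gt_b$ is perfect, and invoke the Kneser--Tits theorem (Platonov, over non-archimedean local fields) to identify $\Gt_b$ with the subgroup $\Gt_b^+$ generated by the $F$-points of unipotent radicals of parabolic subgroups. The only difference lies in how the perfectness of $\Gt_b^+$ is obtained: the paper simply cites Tits (\emph{Algebraic and abstract simple groups}, Ann.\ of Math.\ 1964), which gives this whenever $\card(F)\geq 4$, whereas you sketch the underlying mechanism directly via the commutator map $u\mapsto[s,u]$ with a suitable split torus element $s$. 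Your route is more self-contained; the paper's is cleaner bookkeeping. One small caveat on your sketch: when $2\alpha$ is a relative root, $u\mapsto[s,u]$ is not a group homomorphism, so calling it a ``bijection'' of $\Ub_\alpha(F)$ outright is a bit quick---what the filtration argument actually gives (and what suffices) is surjectivity, obtained by a short d\'evissage through $\Ub_\alpha\supseteq\Ub_{2\alpha}\supseteq 1$ using bijectivity on each graded piece.
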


\begin{proof}
Since images and direct products of perfect groups are perfect, it is enough to prove that $\Gt_b$ is perfect for all $b \in \Bc(\Pb)$.
Let $b \in \Bc(\Pb)$.
We write $\Gt'_b$ for the (normal) subgroup of $\Gt_b$ generated by the $F$-points of the unipotent radicals of the parabolic subgroups of $\Gbt_b$.
Since $\Gbt_b$ is almost-simple and $\card(F) \geq 4$ (indeed $F$ is infinite), $\Gt'_b$ is perfect (see \cite[§~3.3]{Tits64}).
Since $\Gbt_b$ is almost-simple, simply connected, and isotropic, we have $\Gt_b=\Gt'_b$ (this is the Knesser--Tits conjecture, proved by Platonov over non-archimedean local fields, see \cite[Theorem 7.6]{PR}).
Thus $\Gt_b$ is perfect.
\end{proof}

Let $\sigma$ be an $R$-representation of $L$.
If $\sigma$ extends to an $R$-representation of $G$ trivial on $U$, then any extension has to be trivial on $\GU$.
In particular, $\sigma$ has to be trivial on $L \cap \GU$.
Since $L\GU=G$ (\cite[II.5 Corollary (iii)]{AHHV}), we deduce the converse: if $\sigma$ is trivial on $L \cap \GU$, then it extends uniquely to an $R$-representation $\e_G(\sigma)$ of $G$ trivial on $U$.
Moreover, $\e_G(\sigma)$ is smooth, admissible, or irreducible if and only if $\sigma$ is (see the proof of \cite[II.7 Proposition]{AHHV}).
Thus we obtain an $R$-linear fully faithful functor
\begin{equation*}
\e_G : \Mod_{L/(L \cap \GU)}^\infty(R) \to \Mod_G^\infty(R)
\end{equation*}
which commutes with all small limits and colimits (and in particular all finite ones, so that it is exact) and respects admissibility.

\section{Parabolic induction and ordinary parts}

We do not make any assumption on $R$.
Recall the smooth parabolic induction functor
\begin{equation*}
\Ind_{\bar Q}^G : \Mod_M^\infty(R) \to \Mod_G^\infty(R)
\end{equation*}
defined on any smooth $R$-representation $\sigma$ of $M$ as the $R$-module $\Ind_{\bar Q}^G(\sigma)$ of locally constant functions $f : G \to \sigma$ satisfying $f(m\bar ng) = m \cdot f(g)$ for all $m \in M$, $\bar n \in \bar N$, and $g \in G$, endowed with the smooth action of $G$ by right translation.
It is $R$-linear, exact, and commutes with small direct sums (\cite[Proposition 4.2]{VigAdj}).
In the other direction, there is the ordinary part functor (\cite{Em1,VigAdj})
\begin{equation*}
\Ord_Q : \Mod_G^\infty(R) \to \Mod_M^{\infty,Z_M-\lfin}(R)
\end{equation*}
which is right adjoint to the restriction of $\Ind_{\bar Q}^G$ to locally $Z_M$-finite representations (\cite[Corollary 7.3]{VigAdj}).
It is $R$-linear and left exact.
When $R$ is noetherian, $\Ord_Q$ also commutes with small inductive limits (\cite[Proposition 3.2.4]{Em1}) and both functors respect admissibility (\cite[Corollaries 4.7 and 8.3]{VigAdj}).

\medskip

Let $\sigma$ be a locally $Z_M$-finite smooth $R$-representation of $L$ trivial on $L \cap \GU$.
We assume $R$ noetherian and $\sigma_{p-\ord}=0$.
By \cite[Corollary 5.9]{AHV}, the unit of the adjunction between $\Ind_{\bar Q}^G$ and $\Ord_Q$ induces a natural isomorphism
\begin{equation} \label{unit}
\e_M(\sigma) \iso \Ord_Q(\Ind_{\bar Q}^G(\e_M(\sigma))).
\end{equation}
Given a smooth $R$-representation $\pi$ of $G$, the counit of the adjunction between $\Ind_{\bar Q}^G$ and $\Ord_Q$ induces a natural morphism
\begin{equation} \label{counit}
\Ind_{\bar Q}^G(\Ord_Q(\pi)) \to \pi.
\end{equation}
Applying $\Ind_{\bar Q}^G$ to \eqref{unit} and composing the result with \eqref{counit} with $\pi=\Ind_{\bar Q}^G(\e_M(\sigma))$ yields a natural composite
\begin{equation} \label{compInd}
\Ind_{\bar Q}^G(\e_M(\sigma)) \iso \Ind_{\bar Q}^G(\Ord_Q(\Ind_{\bar Q}^G(\e_M(\sigma)))) \to \Ind_{\bar Q}^G(\e_M(\sigma))
\end{equation}
which is the identity of $\Ind_{\bar Q}^G(\e_M(\sigma))$ by the unit-counit equations (\cite[(2.5)]{HSS}).

\section{Generalised Steinberg representations}

We do not make any assumption on $R$.
Given a smooth $R$-representation $\sigma$ of $L$ trivial on $L \cap \GU$, we define a smooth $R$-representation of $G$ by setting
\begin{equation*}
\St_{\bar Q}^G(\sigma) \coloneqq \frac{\Ind_{\bar Q}^G(\e_M(\sigma))}{\textstyle \sum_{\Qb' \supsetneq \Qb} \Ind_{\bar Q'}^G(\e_{M'}(\sigma))}
\end{equation*}
where $\Qb'=\Mb'\Nb'$ runs among the standard parabolic subgroups of $\Gb$ (i.e.\@ such that $\Bb \subseteq \Qb'$ and $\Sb \subseteq \Mb'$) strictly containing $\Qb$.
There are natural isomorphisms $\Ind_{\bar Q'}^G(\e_{M'}(\sigma)) \simeq \Ind_{\bar Q'}^G(R) \otimes_R \e_G(\sigma)$ for all standard parabolic subgroups $\Qb'=\Mb'\Nb'$ of $\Gb$ containing $\Qb$, which are compatible with the natural injections $\Ind_{\bar Q'}^G(\e_{M'}(\sigma)) \hookrightarrow \Ind_{\bar Q}^G(\e_M(\sigma))$, hence a natural isomorphism
\begin{equation} \label{St}
\St_{\bar Q}^G(\sigma) \simeq \St_{\bar Q}^G(R) \otimes_R \e_G(\sigma)
\end{equation}
and $\St_{\bar Q}^G(R)$ is free over $R$ (\cite[Corollaire 5.6]{LySt}).
Thus we obtain an $R$-linear exact functor
\begin{equation*}
\St_{\bar Q}^G : \Mod_{L/(L \cap \GU)}^\infty(R) \to \Mod_G^\infty(R)
\end{equation*}
which commutes with small direct sums.
When $R$ is noetherian and $p$ is nilpotent in $R$, $\St_{\bar Q}^G$ respects admissibility (\cite[Theorem 4.21]{AHV}).

\medskip

Let $\sigma$ be a locally $Z_M$-finite smooth $R$-representation of $L$ trivial on $L \cap \GU$.
We assume $R$ noetherian and $\sigma_{p-\ord}=0$.
By definition, there is a natural surjection
\begin{equation} \label{natsurj}
\Ind_{\bar Q}^G(\e_M(\sigma)) \twoheadrightarrow \St_{\bar Q}^G(\sigma).
\end{equation}
Applying $\Ord_Q$ to \eqref{natsurj} and precomposing the result with \eqref{unit} yields a natural composite
\begin{equation} \label{compOrd}
\e_M(\sigma) \iso \Ord_Q(\Ind_{\bar Q}^G(\e_M(\sigma))) \to \Ord_Q(\St_{\bar Q}^G(\sigma)).
\end{equation}

\begin{lemm} \label{lemm:compOrd}
The natural composite \eqref{compOrd} is an isomorphism.
\end{lemm}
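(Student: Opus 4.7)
The plan is to analyse the defining short exact sequence
$$0 \to K \to \Ind_{\bar Q}^G(\e_M(\sigma)) \xrightarrow{q} \St_{\bar Q}^G(\sigma) \to 0,$$
where $K \coloneqq \sum_{\Qb' \supsetneq \Qb} \Ind_{\bar Q'}^G(\e_{M'}(\sigma))$, by applying the left exact functor $\Ord_Q$. Since \eqref{compOrd} factors as the unit isomorphism \eqref{unit} followed by $\Ord_Q(q)$, it suffices to show $\Ord_Q(q)$ is an isomorphism.

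The crux is the key vanishing
$$\Ord_Q\bigl(\Ind_{\bar Q'}^G(\e_{M'}(\sigma))\bigr) = 0 \quad \text{for every } \Qb' \supsetneq \Qb.$$
Because $\Ord_Q$ commutes with small inductive limits under the noetherian hypothesis on $R$ (\cite[Proposition 3.2.4]{Em1}) and $K$ is a finite sum of such subobjects, this immediately yields $\Ord_Q(K) = 0$, hence the injectivity of $\Ord_Q(q)$ and, through \eqref{unit}, of the composite \eqref{compOrd}. For surjectivity, the natural continuation is to pursue the parallel vanishing $\HOrd[1]_Q(\Ind_{\bar Q'}^G(\e_{M'}(\sigma))) = 0$ for every $\Qb' \supsetneq \Qb$; the long exact sequence of derived ordinary parts associated to the displayed short exact sequence then forces $\Ord_Q(q)$ to be surjective. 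An alternative is to exploit the factorisation of $q$ through the counit $\Ind_{\bar Q}^G(\Ord_Q(\St_{\bar Q}^G(\sigma))) \to \St_{\bar Q}^G(\sigma)$, using \eqref{St} and the trivial $N$-action on $\e_G(\sigma)$ to reduce the lift of $\Ord_Q$-vectors to a computation with $\St_{\bar Q}^G(R)$ alone.

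The main obstacle is the key vanishing itself. I would establish it through the Bruhat-style filtration of $\Ord_Q \circ \Ind_{\bar Q'}^G$ developed by Emerton and refined by Hauseux: its graded pieces are indexed by minimal-length representatives of $W_{\Mb'}\backslash W / W_{\Mb}$, with the trivial double coset appearing only when $\Qb' = \Qb$ and hence absent here. On each remaining piece a dominant central element $s \in Z_M^+$ acts on the relevant $w$-twist of $\e_{M'}(\sigma)$ by a strictly positive power of $p$ (up to an invertible $M$-operator), so the hypothesis $\sigma_{p-\ord} = 0$ annihilates every contribution to the ordinary part. Exactly the same dichotomy (trivial coset absent, remaining cosets $p$-adically contracting) should drive the $\HOrd[1]_Q$-vanishing needed for surjectivity.
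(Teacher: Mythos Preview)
Your strategy diverges from the paper's, and the surjectivity half has a genuine gap under the stated hypotheses.

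The paper's argument is short and avoids any long exact sequence: by \cite[Lemma~7.10]{AHV} the open Bruhat cell $\cind_{\bar Q}^{\bar QQ}(\e_M(\sigma))$ embeds as a $Q$-subrepresentation into both $\Ind_{\bar Q}^G(\e_M(\sigma))$ and $\St_{\bar Q}^G(\sigma)$, compatibly with \eqref{natsurj}; by \cite[Corollary~7.14]{AHV}, applying $\Ord_Q$ to either inclusion is already an isomorphism, so $\Ord_Q$ of \eqref{natsurj} is an isomorphism. This uses only the standing hypotheses ($R$ noetherian, $\sigma$ locally $Z_M$-finite, $\sigma_{p-\ord}=0$), with no constraint on $\car(F)$.

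Your route to surjectivity, by contrast, invokes the $\delta$-functor $\HOrd_Q$ and the vanishing of $\HOrd[1]_Q$ on $K$ (or on each $\Ind_{\bar Q'}^G(\e_{M'}(\sigma))$). In the framework of \cite{Em2,JHD}, and as recalled later in the paper, the higher ordinary parts are only available when $R$ is artinian with $p$ nilpotent and $\car(F)=0$. The lemma is asserted under the weaker hypotheses $R$ noetherian and $\sigma_{p-\ord}=0$, with $\car(F)$ arbitrary, so your argument does not establish it in the required generality. The alternative you mention via the counit factorisation is not developed enough to close this gap.

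There is also a smaller slip in the injectivity step. You infer $\Ord_Q(K)=0$ from the vanishing on each summand by appealing to commutation with small inductive limits, but $K=\sum_{\Qb'\supsetneq\Qb}\Ind_{\bar Q'}^G(\e_{M'}(\sigma))$ is a finite non-direct sum of subobjects, not a filtered colimit of them, and $\Ord_Q$ is only left exact; vanishing on each $K_{Q'}$ does not formally imply vanishing on $\sum K_{Q'}$. This is repairable (for instance via the Bruhat filtration and \eqref{Gr}, or by noting that $K$ meets the open cell trivially and that $\Ord_Q$, being left exact, preserves this intersection while \cite[Corollary~7.14]{AHV} identifies $\Ord_Q$ of the open cell with $\Ord_Q$ of the full induced representation), but as written the deduction is not justified.
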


\begin{proof}
We deduce from \cite[Lemma 7.10]{AHV} that there is a commutative diagram of smooth $R$-representations of $Q$
\begin{equation*}
\begin{tikzcd}
\cind_{\bar Q}^{\bar QQ}(\e_M(\sigma)) \dar[equal] \rar[hook] & \Ind_{\bar Q}^G(\e_M(\sigma)) \dar[two heads] \\
\cind_{\bar Q}^{\bar QQ}(\e_M(\sigma)) \rar[hook] & \St_{\bar Q}^G(\sigma)
\end{tikzcd}
\end{equation*}
where the right vertical arrow is \eqref{natsurj}.
We deduce from \cite[Corollary 7.14]{AHV} that applying $\Ord_Q$ to the horizontal arrows yields isomorphisms.
Thus applying $\Ord_Q$ to \eqref{natsurj} also yields an isomorphism.
\end{proof}

Applying $\Ind_{\bar Q}^G$ to \eqref{compOrd} and composing the result with \eqref{counit} with $\pi=\St_{\bar Q}^G(\sigma)$ yields a natural composite
\begin{equation} \label{compSt}
\Ind_{\bar Q}^G(\e_M(\sigma)) \iso \Ind_{\bar Q}^G(\Ord_Q(\St_{\bar Q}^G(\sigma))) \to \St_{\bar Q}^G(\sigma).
\end{equation}

\begin{lemm} \label{lemm:compSt}
The natural composite \eqref{compSt} is the natural surjection \eqref{natsurj}.
\end{lemm}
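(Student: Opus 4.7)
The plan is to unwind the construction of \eqref{compSt} and then reduce it to the already established fact that \eqref{compInd} is the identity, using naturality of the counit. By construction, the composite \eqref{compSt} equals
\begin{equation*}
\eqref{counit}_{\St_{\bar Q}^G(\sigma)} \circ \Ind_{\bar Q}^G(\eqref{compOrd})
= \eqref{counit}_{\St_{\bar Q}^G(\sigma)} \circ \Ind_{\bar Q}^G\bigl(\Ord_Q(\eqref{natsurj})\bigr) \circ \Ind_{\bar Q}^G(\eqref{unit}),
\end{equation*}
so the task is to move the middle factor past the counit.

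First I would invoke the naturality of the counit of the adjunction $(\Ind_{\bar Q}^G,\Ord_Q)$ applied to the morphism \eqref{natsurj}. This gives a commutative square
\begin{equation*}
\begin{tikzcd}[column sep=large]
\Ind_{\bar Q}^G(\Ord_Q(\Ind_{\bar Q}^G(\e_M(\sigma)))) \rar["\eqref{counit}"] \dar["\Ind_{\bar Q}^G(\Ord_Q(\eqref{natsurj}))"'] & \Ind_{\bar Q}^G(\e_M(\sigma)) \dar["\eqref{natsurj}"] \\
\Ind_{\bar Q}^G(\Ord_Q(\St_{\bar Q}^G(\sigma))) \rar["\eqref{counit}"] & \St_{\bar Q}^G(\sigma).
\end{tikzcd}
\end{equation*}
Substituting the resulting identity into the expression above yields
\begin{equation*}
\eqref{compSt} = \eqref{natsurj} \circ \eqref{counit}_{\Ind_{\bar Q}^G(\e_M(\sigma))} \circ \Ind_{\bar Q}^G(\eqref{unit}).
\end{equation*}

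Next I would observe that the second and third factors on the right-hand side are, by definition, exactly the composite \eqref{compInd} associated to the representation $\sigma$, which the excerpt records as being the identity of $\Ind_{\bar Q}^G(\e_M(\sigma))$ by the unit-counit equations. Hence \eqref{compSt} reduces to \eqref{natsurj}, as desired.

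The argument is entirely formal once one trusts the naturality of the counit and the triangle identity; there is no real obstacle beyond carefully matching indices so that the naturality square is applied to the correct morphism \eqref{natsurj}. No further input from the structure of $\St_{\bar Q}^G$ is needed, beyond the construction of \eqref{compOrd} and \eqref{compSt} given in the excerpt.
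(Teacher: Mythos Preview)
Your argument is correct and is essentially the same as the paper's: both use naturality of the counit with respect to \eqref{natsurj} together with the fact that \eqref{compInd} is the identity. The only difference is cosmetic—the paper packages these two steps into a single commutative diagram whose left square commutes by definition of \eqref{compOrd} and whose right square is your naturality square, while you write the same equalities out linearly.
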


\begin{proof}
There is a commutative diagram of smooth $R$-representations of $G$
\begin{equation*}
\begin{tikzcd}
\Ind_{\bar Q}^G(\e_M(\sigma)) \dar[equal] \rar{\sim} & \Ind_{\bar Q}^G(\Ord_Q(\Ind_{\bar Q}^G(\e_M(\sigma)))) \dar{\vertiso} \rar{\sim} & \Ind_{\bar Q}^G(\e_M(\sigma)) \dar[two heads] \\
\Ind_{\bar Q}^G(\e_M(\sigma)) \rar{\sim} & \Ind_{\bar Q}^G(\Ord_Q(\St_{\bar Q}^G(\sigma))) \rar[two heads] & \St_{\bar Q}^G(\sigma)
\end{tikzcd}
\end{equation*}
where the upper and lower arrows are \eqref{compInd} and \eqref{compSt} respectively, the middle vertical arrow is obtained by applying $\Ind_{\bar Q}^G \Ord_Q$ to \eqref{natsurj}, and the right vertical arrow is \eqref{natsurj}.
The left square is commutative by definition and the right square is commutative by naturality of \eqref{counit}.
Since the upper horizontal composite is the identity of $\Ind_{\bar Q}^G(\e_M(\sigma))$, the lower horizontal composite is \eqref{natsurj}.
\end{proof}

\begin{prop} \label{prop:StFF}
Assume that $R$ is noetherian and $p$ is nilpotent in $R$.
The functor $\St_{\bar Q}^G : \Mod_{L/(L \cap \GU)}^{\infty,Z_M-\lfin}(R) \to \Mod_G^\infty(R)$ is fully faithful.
\end{prop}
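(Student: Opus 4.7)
The plan is to exhibit an explicit two-sided inverse to the map $g \mapsto \St_{\bar Q}^G(g)$ from $\Hom_L(\sigma,\sigma')$ to $\Hom_G(\St_{\bar Q}^G(\sigma), \St_{\bar Q}^G(\sigma'))$, using the comparison Lemmas \ref{lemm:compOrd} and \ref{lemm:compSt}. Since $p$ is nilpotent in $R$ we have $\sigma_{p-\ord}=0$ automatically, so these lemmas apply to every $\sigma$ in the source category. I will write $\phi_\sigma \colon \e_M(\sigma) \iso \Ord_Q(\St_{\bar Q}^G(\sigma))$ for the isomorphism \eqref{compOrd}, denote by $\epsilon$ the counit of the adjunction $(\Ind_{\bar Q}^G, \Ord_Q)$, and abbreviate the natural surjection \eqref{natsurj} as $s_\sigma \colon \Ind_{\bar Q}^G(\e_M(\sigma)) \twoheadrightarrow \St_{\bar Q}^G(\sigma)$; Lemma \ref{lemm:compSt} then reads $s_\sigma = \epsilon_{\St_{\bar Q}^G(\sigma)} \circ \Ind_{\bar Q}^G(\phi_\sigma)$.

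Next I would observe that $\e_M$ restricted to our source is fully faithful: setting $U_M := U \cap M$ (the unipotent radical of $P \cap M$), we have $L \cap \langle\prescript{M}{}{U_M}\rangle \subseteq L \cap \GU$ because $U_M \subseteq U$ and $M \subseteq G$, so the functor factors as a full embedding $\Mod_{L/(L \cap \GU)}^\infty(R) \hookrightarrow \Mod_{L/(L \cap \langle\prescript{M}{}{U_M}\rangle)}^\infty(R)$ followed by the $M$-level analogue of the fully faithful functor $\e_G$ recalled in Section 2. This permits the candidate inverse $\psi$: given $f \colon \St_{\bar Q}^G(\sigma) \to \St_{\bar Q}^G(\sigma')$, let $g := \psi(f)$ be the unique $L$-morphism with $\e_M(g) = \phi_{\sigma'}^{-1} \circ \Ord_Q(f) \circ \phi_\sigma$. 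The identity $\psi \circ \St_{\bar Q}^G = \mathrm{id}$ is immediate from naturality of $\phi$: for $g \in \Hom_L(\sigma,\sigma')$, the equation $\Ord_Q(\St_{\bar Q}^G(g)) \circ \phi_\sigma = \phi_{\sigma'} \circ \e_M(g)$ matches exactly the defining equation of $\psi(\St_{\bar Q}^G(g))$.

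The main obstacle will be the reverse identity $\St_{\bar Q}^G(\psi(f)) = f$, which I plan to establish by checking agreement after pre-composition with $s_\sigma$; surjectivity of $s_\sigma$ then concludes. Substituting $s_\sigma = \epsilon_{\St_{\bar Q}^G(\sigma)} \circ \Ind_{\bar Q}^G(\phi_\sigma)$ via Lemma \ref{lemm:compSt} and invoking naturality of $\epsilon$ converts $f \circ s_\sigma$ into $\epsilon_{\St_{\bar Q}^G(\sigma')} \circ \Ind_{\bar Q}^G(\Ord_Q(f) \circ \phi_\sigma)$; the defining equation for $\psi(f)$ rewrites the inner factor as $\phi_{\sigma'} \circ \e_M(\psi(f))$, and a second application of Lemma \ref{lemm:compSt} collapses the expression to $s_{\sigma'} \circ \Ind_{\bar Q}^G(\e_M(\psi(f)))$, which equals $\St_{\bar Q}^G(\psi(f)) \circ s_\sigma$ by functoriality of $\St_{\bar Q}^G$. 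The entire argument is then a formal diagram chase, and no input beyond the two lemmas and the fully faithfulness of $\e_M$ is required.
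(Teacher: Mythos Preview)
Your proposal is correct and follows essentially the same route as the paper: define the candidate inverse via $\Ord_Q$ and the isomorphism of Lemma~\ref{lemm:compOrd}, then verify it is a genuine inverse by precomposing with the surjection $s_\sigma$ and invoking Lemma~\ref{lemm:compSt} together with naturality of the counit. The only cosmetic differences are that the paper handles faithfulness separately via the tensor description \eqref{St} (whereas you obtain it from $\psi\circ\St_{\bar Q}^G=\mathrm{id}$), and that the paper packages the chain of equalities into a single commutative diagram.
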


\begin{proof}
We do not assume $p$ nilpotent in $R$.
Let $\sigma$ and $\sigma'$ be two locally $Z_M$-finite $R$-representations of $L$ trivial on $L \cap \GU$.
Let $\phi : \sigma \to \sigma'$ be a morphism such that $\St_{\bar Q}^G(\phi)=0$.
Using \eqref{St}, we deduce that $\e_G(\phi)=0$ because $\St_{\bar Q}^G(R)$ is free over $R$, hence $\phi=0$ because $\e_G$ is faithful.
Thus $\St_{\bar Q}^G$ is faithful.
Assume $\sigma_{p-\ord}=\sigma'_{p-\ord}=0$ (this is automatically true if $p$ is nilpotent in $R$).
Let $\psi : \St_{\bar Q}^G(\sigma) \to \St_{\bar Q}^G(\sigma')$ be a morphism.
There is a commutative diagram of smooth $R$-representations of $M$
\begin{equation} \label{phi}
\begin{tikzcd}[column sep=5em]
\e_M(\sigma) \dar{\vertiso} \rar{\e_M(\phi)} & \e_M(\sigma') \dar{\vertiso} \\
\Ord_Q(\St_{\bar Q}^G(\sigma)) \rar{\Ord_Q(\psi)} & \Ord_Q(\St_{\bar Q}^G(\sigma'))
\end{tikzcd}
\end{equation}
where the vertical arrows are \eqref{compOrd} and its analogue for $\sigma'$, which are isomorphisms by Lemma \ref{lemm:compOrd}, and $\phi : \sigma \to \sigma'$ is the unique morphism making the diagram commutative.
There is a commutative diagram of smooth $R$-representations of $G$
\begin{equation*}
\begin{tikzcd}[column sep=7em]
\Ind_{\bar Q}^G(\e_M(\sigma)) \dar{\vertiso} \rar{\Ind_{\bar Q}^G(\phi)} & \Ind_{\bar Q}^G(\e_M(\sigma')) \dar{\vertiso} \\
\Ind_{\bar Q}^G(\Ord_Q(\St_{\bar Q}^G(\sigma))) \dar[two heads] \rar{\Ind_{\bar Q}^G(\Ord_Q(\psi))} & \Ind_{\bar Q}^G(\Ord_Q(\St_{\bar Q}^G(\sigma'))) \dar[two heads] \\
\St_{\bar Q}^G(\sigma) \rar{\psi} & \St_{\bar Q}^G(\sigma')
\end{tikzcd}
\end{equation*}
where the upper square is obtained by applying $\Ind_{\bar Q}^G$ to \eqref{phi} and the lower vertical arrows are \eqref{counit} with $\pi=\St_{\bar Q}^G(\sigma)$ and its analogue for $\sigma'$.
The lower square is commutative by naturality of \eqref{counit}.
By Lemma \ref{lemm:compSt}, the vertical composites are \eqref{natsurj} and its analogue for $\sigma'$.
Thus $\psi=\St_{\bar Q}^G(\phi)$.
\end{proof}

\begin{coro} \label{coro:StFF}
Assume that $R$ is noetherian and $p$ is nilpotent in $R$.
The functor $\St_{\bar Q}^G : \Mod_{L/(L \cap \GU)}^\adm(R) \to \Mod_G^\adm(R)$ is fully faithful.
\end{coro}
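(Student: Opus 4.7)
The plan is to deduce the corollary directly from Proposition~\ref{prop:StFF} by showing that admissible representations embed as a full subcategory of the locally $Z_M$-finite ones, so that the fully faithful property simply restricts.

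First, I would observe that under the hypotheses $\St_{\bar Q}^G$ sends $\Mod_{L/(L \cap \GU)}^\adm(R)$ into $\Mod_G^\adm(R)$ by \cite[Theorem 4.21]{AHV}, recalled in the paragraph following \eqref{St}, so the claimed restricted functor is well defined.

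The one routine verification needed is that every admissible smooth $R$-representation $\sigma$ of $L$ trivial on $L \cap \GU$ is automatically locally $Z_M$-finite. Since $\Lb \subseteq \Mb$, the centre $Z_M$ of $M$ is contained in $L$, so it acts on $\sigma$. Given $v \in \sigma$, I would pick an open compact subgroup $H \subseteq L$ fixing $v$; then $\sigma^H$ is finitely generated over the noetherian ring $R$ by admissibility, and it is stable under $Z_M$ because $Z_M$ commutes with $H$. Hence $R[Z_M] \cdot v$ is contained in the finitely generated $R$-submodule $\sigma^H$, which is the definition of local $Z_M$-finiteness.

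This exhibits $\Mod_{L/(L \cap \GU)}^\adm(R)$ as a full subcategory of $\Mod_{L/(L \cap \GU)}^{\infty,Z_M-\lfin}(R)$, on which $\St_{\bar Q}^G$ is already fully faithful by Proposition~\ref{prop:StFF}; restricting a fully faithful functor to a full subcategory of its source preserves full faithfulness, which gives the corollary. I do not foresee any real obstacle: the only substantive point is the elementary inclusion $Z_M \subseteq L$ used to pass from admissibility to local $Z_M$-finiteness.
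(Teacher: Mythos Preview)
Your proposal is correct and follows essentially the same route as the paper: both reduce to Proposition~\ref{prop:StFF} via the full inclusion $\Mod_{L/(L \cap \GU)}^\adm(R) \hookrightarrow \Mod_{L/(L \cap \GU)}^{\infty,Z_M-\lfin}(R)$. The only difference is that the paper cites \cite[Lemma 3.6]{VigAdj} for this inclusion, whereas you give the direct argument using $Z_M \subseteq L$ and the $Z_M$-stability of $\sigma^H$.
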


\begin{proof}
There is a commutative diagram of $R$-linear categories
\begin{equation*}
\begin{tikzcd}
\Mod_{L/(L \cap \GU)}^{\infty,Z_M-\lfin}(R) \rar{\St_{\bar Q}^G} & \Mod_G^\infty(R) \\
\Mod_{L/(L \cap \GU)}^\adm(R) \uar[hook] \rar{\St_{\bar Q}^G} & \Mod_G^\adm(R) \uar[hook]
\end{tikzcd}
\end{equation*}
where the vertical arrows are the full inclusions (see \cite[Lemma 3.6]{VigAdj} for the left one).
Since the upper horizontal arrow is fully faithful by Proposition \ref{prop:StFF}, so is the lower horizontal arrow.
\end{proof}

\section{Bruhat filtration}

We do not make any assumption on $R$.
Let $\sigma$ be a smooth $R$-representation of $L$ trivial on $L \cap \GU$.
We write $\prescript{\Qb}{}{W}$ for the set of representatives of minimal length of the right cosets $W_\Mb \backslash W$ and $w_{\Mb,0}$ for the longest element in $W_\Mb$.
We set
\begin{equation*}
\prescript{\Qb}{0}{W} \coloneqq \left\{ \prescript{\Qb}{}{w} \in \prescript{\Qb}{}{W} \middlevert w_{\Mb,0} \prescript{\Qb}{}{w} \not \in w_{\Mb',0} \prescript{\Qb'}{}{W} \text{ for all } \Qb' \supsetneq \Qb \right\}
\end{equation*}
where $\Qb'=\Mb'\Nb'$ runs among the standard parabolic subgroups of $\Gb$ strictly containing $\Qb$.
In \cite[§~2.2]{JHD} is constructed a natural filtration $\Fil_B^\bullet(\Ind_{\bar Q}^G(\e_M(\sigma)))$ of $\Ind_{\bar Q}^G(\e_M(\sigma))_{|B}$ by $R$-subrepresentations indexed by $\prescript{\Qb}{}{W}$ such that for all $\prescript{\Qb}{}{w} \in \prescript{\Qb}{}{W}$, there is a natural isomorphism
\begin{equation*}
\Gr_B^{\prescript{\Qb}{}{w}}(\Ind_{\bar Q}^G(\e_M(\sigma))) \simeq \cind_{\bar Q}^{\bar Q \prescript{\Qb}{}{w} B}(\e_M(\sigma)).
\end{equation*}
By taking its image by \eqref{natsurj}, we obtain a natural filtration $\Fil_B^\bullet(\St_{\bar Q}^G(\sigma))$ of $\St_{\bar Q}^G(\sigma)_{|B}$ by $R$-subrepresentations indexed by $\prescript{\Qb}{}{W}$.
We deduce from \cite[Lemma 7.10]{AHV} that for all $\prescript{\Qb}{}{w} \in \prescript{\Qb}{}{W}$, there is a natural isomorphism
\begin{equation} \label{Gr}
\Gr_B^{\prescript{\Qb}{}{w}}(\St_{\bar Q}^G(\sigma)) \simeq
\begin{cases}
\cind_{\bar Q}^{\bar Q \prescript{\Qb}{}{w} B}(\e_M(\sigma)) &\text{if $\prescript{\Qb}{}{w} \in \prescript{\Qb}{0}{W}$,} \\
0 &\text{otherwise.}
\end{cases}
\end{equation}

\section{Higher ordinary parts}

We assume $R$ artinian, $p$ nilpotent in $R$, and $\car(F)=0$.
The results of \cite{Em2} remain valid over $R$ instead of $A$ (in loc.\@ cit.\@ $A$ is an artinian local $\Z_p$-algebra with finite residue field; here $R$ is naturally an artinian $\Z_p$-algebra but need not be local with finite residue field).
In particular, there are $R$-linear functors
\begin{equation*}
\HOrd[n]_Q : \Mod_G^\infty(R) \to \Mod_M^{\infty,Z_M-\lfin}(R)
\end{equation*}
for all $n \in \N$ with $\HOrd[0]_Q=\Ord_Q$.
They commute with small inductive limits, respect admissibility, and form a cohomological $\delta$-functor
\begin{equation*}
\HOrd_Q : \Mod_G^\adm(R) \to \Mod_M^\adm(R).
\end{equation*}
Likewise, the results of \cite{JHD} remain valid over $R$ instead of $A$.

\begin{prop} \label{prop:H1Ord}
Let $\sigma$ be an admissible $R$-representation of $L$ trivial on $L \cap \GU$ and $\Qb'=\Mb'\Nb'$ be a standard parabolic subgroup of $\Gb$ containing $\Qb$.
We have $\HOrd[1]_{Q'}(\St_{\bar Q}^G(\sigma))=0$.
\end{prop}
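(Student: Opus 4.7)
The strategy mirrors that of \cite{JHD}, where the higher ordinary parts of parabolic inductions are computed via a combination of the Bruhat filtration and the combinatorics of Weyl-group double cosets $W_{\Mb'} \backslash W / W_\Mb$ and their interaction with simple roots. The remarks opening this section ensure that both \cite{Em2} and \cite{JHD} apply over our artinian $R$. I propose to transport that strategy to the Steinberg quotient $\St_{\bar Q}^G(\sigma)$, exploiting the vanishing of graded pieces outside $\prescript{\Qb}{0}{W}$ recorded in \eqref{Gr}.

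Concretely, one feeds $\St_{\bar Q}^G(\sigma)$ together with its Bruhat filtration into the \cite{JHD} machinery; this produces a computation of $\HOrd[n]_{Q'}(\St_{\bar Q}^G(\sigma))$ in terms of contributions indexed by the nonzero graded pieces $\cind_{\bar Q}^{\bar Q \prescript{\Qb}{}{w} B}(\e_M(\sigma))$ for $\prescript{\Qb}{}{w} \in \prescript{\Qb}{0}{W}$ (the other $\prescript{\Qb}{}{w}$ contribute zero by \eqref{Gr}). For $n=1$, each such cell's contribution is a direct sum indexed by simple roots $\alpha \in \Delta \setminus \Delta_{\Mb'}$, with the $\alpha$-summand nonzero only when the adjacent cell $s_\alpha \prescript{\Qb}{}{w}$ lies in a specific shorter coset depending on $\Qb'$. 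The heart of the proof is the combinatorial observation that a nonzero $\alpha$-contribution in the Bruhat cell indexed by some $\prescript{\Qb}{}{w} \in \prescript{\Qb}{0}{W}$ would force the existence of a standard parabolic $\Qb'' \supsetneq \Qb$ and an element $\prescript{\Qb''}{}{w''} \in \prescript{\Qb''}{}{W}$ with $w_{\Mb,0} \prescript{\Qb}{}{w} = w_{\Mb'',0} \prescript{\Qb''}{}{w''}$, which is precisely what the definition of $\prescript{\Qb}{0}{W}$ forbids.

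The main obstacle is this last combinatorial step: identifying the vanishing criterion produced by the cohomological computation of \cite{JHD} with the definition of $\prescript{\Qb}{0}{W}$ on the nose. This requires careful bookkeeping of Weyl-group elements, lengths and simple roots through the ordinary-part formulas, exhibiting, for each hypothetical nonzero contribution, the standard parabolic $\Qb''$ witnessing membership of $w_{\Mb,0} \prescript{\Qb}{}{w}$ in $w_{\Mb'',0} \prescript{\Qb''}{}{W}$. Once this is verified, the vanishing $\HOrd[1]_{Q'}(\St_{\bar Q}^G(\sigma)) = 0$ follows immediately.
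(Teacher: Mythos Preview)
Your overall plan---reduce to the graded pieces via \eqref{Gr} and then apply the machinery of \cite{JHD} to each surviving cell $\cind_{\bar Q}^{\bar Q \prescript{\Qb}{}{w} B}(\e_M(\sigma))$---is exactly what the paper does. But your account of \emph{why} each cell contributes zero is where the gap lies.

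You assert that every nonzero contribution from a cell indexed by $\prescript{\Qb}{}{w} \in \prescript{\Qb}{0}{W}$ would produce a witness $\Qb'' \supsetneq \Qb$ with $w_{\Mb,0}\prescript{\Qb}{}{w} \in w_{\Mb'',0}\prescript{\Qb''}{}{W}$, contradicting membership in $\prescript{\Qb}{0}{W}$. This is not what happens. The formula from \cite[Theorem~3.3.3]{JHD} is not a direct sum over simple roots; it is a degree shift: writing $\prescript{\Qb}{}{w} = \prescript{\Qb}{}{w}^{\Qb'} w_{\Mb'}$, one is reduced to showing
\[
\HOrd[\,1-[F:\Q_p]\,d_{\prescript{\Qb}{}{w}^{\Qb'}}\,]_{M \cap Q''}(\e_M(\sigma))=0
\]
for a certain parabolic $\Qb'' \subseteq \Mb$. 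When $d_{\prescript{\Qb}{}{w}^{\Qb'}}=0$ this is trivial, and when $[F:\Q_p]\,d_{\prescript{\Qb}{}{w}^{\Qb'}} \geq 2$ the degree is negative. The only subtle case is $F=\Q_p$ and $d_{\prescript{\Qb}{}{w}^{\Qb'}}=1$, i.e.\ $\prescript{\Qb}{}{w}^{\Qb'}=s_\alpha$ for some $\alpha \in \Delta \setminus \Delta_{\Mb'}$. Here the combinatorial argument you sketch works \emph{only when} $\alpha \perp \Delta_\Mb$: in that sub-case one exhibits $\Qb_\alpha \supsetneq \Qb$ with $w_{\Mb,0}\prescript{\Qb}{}{w} \in w_{\Mb_\alpha,0}\prescript{\Qb_\alpha}{}{W}$, contradicting $\prescript{\Qb}{}{w} \in \prescript{\Qb}{0}{W}$. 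But when $\alpha \not\perp \Delta_\Mb$ there is \emph{no} such combinatorial contradiction; instead one has $\Mb \cap \Qb'' \subsetneq \Mb$, and the vanishing $\Ord_{M \cap Q''}(\e_M(\sigma))=0$ comes from the representation-theoretic fact that $\e_M(\sigma)$ is trivial on the unipotent radical $M \cap N''$ (cf.\ \cite[II.7~Corollary~2]{AHHV}), together with $p$ being nilpotent in $R$. This is a feature specific to representations of the form $\e_M(\sigma)$ and is invisible to the Weyl-group combinatorics alone; your plan needs to incorporate it.
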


\begin{proof}
Note that $\HOrd_Q$ restricts to $\Mod_B^\infty(R)$ (see \cite[Definition 3.1.4]{JHD}).
Using \eqref{Gr}, it is enough to prove that for all $\prescript{\Qb}{}{w} \in \prescript{\Qb}{0}{W}$,
\begin{equation*}
\HOrd[1]_{Q'}(\cind_{\bar Q}^{\bar Q \prescript{\Qb}{}{w} B}(\e_M(\sigma)))=0.
\end{equation*}
Let $\prescript{\Qb}{}{w} \in \prescript{\Qb}{0}{W}$ and write $\prescript{\Qb}{}{w} = \prescript{\Qb}{}{w}^{\Qb'} w_{\Mb'}$ with $\prescript{\Qb}{}{w}^{\Qb'} \in \prescript{\Qb}{}{W}^{\Qb'} \coloneqq \prescript{\Qb}{}{W} \cap \prescript{\Qb'}{}{W}^{-1}$ and $w_{\Mb'} \in W_{\Mb'}$ (see \cite[§~2.1]{JHD}).
Let $\Qb''=\Mb''\Nb''$ be the standard parabolic subgroup of $\Gb$ such that $\Delta_{\Mb''} = \Delta_\Mb \cap \prescript{\Qb}{}{w}^{\Qb'}(\Delta_{\Mb'})$.
Using \cite[Theorem 3.3.3]{JHD}, it is enough to prove that
\begin{equation*}
\HOrd[1-{[F:\Q_p]}d_{\prescript{\Qb}{}{w}^{\Qb'}}]_{M \cap Q''}(\e_M(\sigma))=0
\end{equation*}
(see \cite[Notation 2.3.3]{JHD} for the definition of $d_{\prescript{\Qb}{}{w}^{\Qb'}} \in \N$).
If $d_{\prescript{\Qb}{}{w}^{\Qb'}}=0$, i.e.\@ $\prescript{\Qb}{}{w}^{\Qb'}=1$, then $\HOrd[1]_{M \cap Q''}=\HOrd[1]_M=0$ by \cite[Lemma 3.6.1]{Em2}.
If either $F=\Q_p$ and $d_{\prescript{\Qb}{}{w}^{\Qb'}}>1$, or $F \neq \Q_p$ and $d_{\prescript{\Qb}{}{w}^{\Qb'}}>0$, then $1-[F:\Q_p]d_{\prescript{\Qb}{}{w}^{\Qb'}}<0$ so that $\HOrd[1-{[F:\Q_p]}d_{\prescript{\Qb}{}{w}^{\Qb'}}]_{M \cap Q''}=0$.
Now assume $F=\Q_p$ and $d_{\prescript{\Qb}{}{w}^{\Qb'}}=1$.
Thus $\prescript{\Qb}{}{w}^{\Qb'}=s_\alpha$ for some $\alpha \in \Delta \backslash \Delta_{\Mb'}$ and $\Delta_{\Mb''}=\Delta_\Mb \cap \{\alpha\}^\perp$.
If $\alpha \perp \Delta_\Mb$, then $w_{\Mb,0}s_\alpha=w_{\Mb_\alpha,0}$, where $\Qb_\alpha=\Mb_\alpha\Nb_\alpha$ is the standard parabolic subgroup of $\Gb$ such that $\Delta_{\Mb_\alpha}=\Delta_\Mb \sqcup \{\alpha\}$, and $w_{\Mb'} \in \prescript{\Qb_\alpha}{}{W}$ (because $w_{\Mb'} \in \prescript{\Mb' \cap \Qb}{}{W}_{\Mb'}$ and $\alpha \not \in \Delta_{\Mb'}$) so that $w_{\Mb,0}\prescript{\Qb}{}{w} \in w_{\Mb_\alpha,0} \prescript{\Qb_\alpha}{}{W}$, which contradicts the fact that $\prescript{\Qb}{}{w} \in \prescript{\Qb}{0}{W}$.
Thus $\alpha \not \perp \Delta_\Mb$ so that $\Mb \cap \Qb'' \subsetneq \Mb$.
Since $\e_M(\sigma)$ is trivial on $M \cap N''$ (see the proof of \cite[II.7 Corollary 2]{AHHV}), we deduce that $\Ord_{M \cap Q''}(\e_M(\sigma))=0$.
\end{proof}

\section{Extensions}

We do not make any assumption on $R$.
Given two smooth $R$-representations $\pi,\pi'$ of $G$, we write $\Ext_G^1(\pi',\pi)$ for the $R$-module of extensions of $\pi'$ by $\pi$ in $\Mod_G^\infty(R)$.

\begin{lemm} \label{lemm:Ext}
Let $\sigma,\sigma'$ be smooth $R$-representations of $L$ trivial on $L \cap \GU$.
The functor $\e_G$ induces an $R$-linear isomorphism
\begin{equation*}
\Ext_{L/(L \cap \GU)}^1(\sigma',\sigma) \iso \Ext_G^1(\e_G(\sigma'),\e_G(\sigma)).
\end{equation*}
\end{lemm}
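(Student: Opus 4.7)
The functor $\e_G$ is exact and fully faithful (as recalled at the end of the section on extension from $L$ to $G$), so it induces a well-defined $R$-linear map on $\Ext^1$. The plan is to check injectivity and surjectivity separately; the former is formal, while the latter relies crucially on the perfectness of $\GU$ from Lemma \ref{lemm:perf}.

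For injectivity, suppose $0 \to \sigma \to E \to \sigma' \to 0$ is a short exact sequence in $\Mod_{L/(L \cap \GU)}^\infty(R)$ whose image under $\e_G$ is split. A splitting $\e_G(\sigma') \to \e_G(E)$ then comes, by full faithfulness of $\e_G$, from a morphism $\sigma' \to E$, which is a section of $E \to \sigma'$ by faithfulness.

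For surjectivity, let $0 \to \e_G(\sigma) \to \pi \to \e_G(\sigma') \to 0$ be an extension in $\Mod_G^\infty(R)$. The goal is to show that $\GU$ acts trivially on $\pi$, so that $\pi = \e_G(\pi_0)$ for some (necessarily unique) smooth $R$-representation $\pi_0$ of $L/(L \cap \GU)$. Given $g \in \GU$, the $R$-linear endomorphism $g - 1$ of $\pi$ takes values in $\e_G(\sigma)$ (since $g$ acts trivially on the quotient $\e_G(\sigma')$) and vanishes on $\e_G(\sigma)$ (since $g$ acts trivially there), so it factors through an $R$-linear map $\phi_g : \e_G(\sigma') \to \e_G(\sigma)$. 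A short computation, using once more that $g$ acts trivially on $\e_G(\sigma)$, shows $\phi_{gh} = \phi_g + \phi_h$, so $g \mapsto \phi_g$ is a group homomorphism $\GU \to \Hom_R(\e_G(\sigma'), \e_G(\sigma))$. Since the target is abelian and $\GU$ is perfect by Lemma \ref{lemm:perf}, this homomorphism vanishes, so $\GU$ indeed acts trivially on $\pi$.

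Once $\pi = \e_G(\pi_0)$, the inclusion and projection of the original extension come, by full faithfulness, from morphisms $\sigma \to \pi_0$ and $\pi_0 \to \sigma'$ which form an extension in $\Mod_{L/(L \cap \GU)}^\infty(R)$ whose image under $\e_G$ recovers the given one. The main obstacle in this plan is the surjectivity step, which is what reduces the essential image of $\e_G$ on extensions to its essential image on objects; the whole argument hinges on Lemma \ref{lemm:perf}.
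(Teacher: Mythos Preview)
Your proof is correct and follows essentially the same approach as the paper: both show that $\GU$ acts trivially on the middle term $\pi$ of any extension, with perfectness of $\GU$ (Lemma~\ref{lemm:perf}) as the key input. The only difference is cosmetic---the paper packages the surjectivity step via the long exact sequence in continuous $\GU$-cohomology (identifying $\Hr^1(\GU,\e_G(\sigma))$ with $\Hom_\Grp^\cont(\GU,\e_G(\sigma))$ since the action is trivial), whereas you unwind this by hand through the cocycle $g \mapsto \phi_g$.
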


\begin{proof}
The morphism is well defined since $\e_G$ is exact.
It is $R$-linear since $\e_G$ is, and it is injective since $\e_G$ is fully faithful.
We prove that it is surjective.
Let
\begin{equation*}
0 \to \e_G(\sigma) \to \pi \to \e_G(\sigma') \to 0
\end{equation*}
be a short exact sequence of smooth $R$-representations of $G$.
We prove that $\pi$ is trivial on $\GU$ so that $\pi=\e_G(\pi_{|L})$.
Taking the $\GU$-invariants yields an exact sequence of $R$-modules
\begin{equation} \label{GUinv}
0 \to \e_G(\sigma) \to \pi^{\GU} \to \e_G(\sigma') \to \Hr^1(\GU,\e_G(\sigma))
\end{equation}
where the rightmost term is the $R$-module of $\GU$-cohomology computed using locally constant cochains (see \cite[§~2.2]{Em2}).
Since $\GU$ acts trivially on $\e_G(\sigma)$, there is a natural isomorphism
\begin{equation*}
\Hr^1(\GU,\e_G(\sigma)) \simeq \Hom_\Grp^\cont(\GU,\e_G(\sigma))
\end{equation*}
where the right-hand side is the $R$-module of continuous group homomorphisms, which is trivial by Lemma \ref{lemm:perf} (because $\e_G(\sigma)$ is commutative).
Using \eqref{GUinv}, we deduce that $\pi^{\GU}=\pi$.
\end{proof}

\begin{prop} \label{prop:Ext}
Assume that $R$ is artinian and $p$ is nilpotent in $R$.
Let $\sigma,\sigma'$ be admissible $R$-representations of $L$ trivial on $L \cap \GU$.
The functor $\St_{\bar Q}^G$ induces an $R$-linear isomorphism
\begin{equation*}
\Ext_{L/(L \cap \GU)}^1(\sigma',\sigma) \iso \Ext_G^1(\St_{\bar Q}^G(\sigma'),\St_{\bar Q}^G(\sigma)).
\end{equation*}
\end{prop}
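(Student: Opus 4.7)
The plan is to mirror the proof of Proposition~\ref{prop:StFF} one level higher, with $\Ord_Q$ serving as a partial inverse of $\St_{\bar Q}^G$ at the level of $\Ext^1$. Exactness of $\St_{\bar Q}^G$ ensures that the induced map on $\Ext^1$ is well-defined and $R$-linear; it remains to establish injectivity and surjectivity.

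For injectivity, I would start with an extension $e : 0 \to \sigma \to \tau \to \sigma' \to 0$ in $\Mod_{L/(L \cap \GU)}^\adm(R)$ and apply $\Ord_Q$ to its image $\St_{\bar Q}^G(e)$: Proposition~\ref{prop:H1Ord} with $Q'=Q$ gives $\HOrd[1]_Q(\St_{\bar Q}^G(\sigma)) = 0$, so $\Ord_Q(\St_{\bar Q}^G(e))$ remains short exact, and by the naturality of Lemma~\ref{lemm:compOrd} it is canonically isomorphic to $\e_M(e)$. If $\St_{\bar Q}^G(e)$ splits, then so does $\e_M(e)$, and full faithfulness of $\e_M$ forces $e$ to split.

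For surjectivity, given $0 \to \St_{\bar Q}^G(\sigma) \to \pi \to \St_{\bar Q}^G(\sigma') \to 0$ in $\Mod_G^\adm(R)$, apply $\Ord_Q$; Proposition~\ref{prop:H1Ord} and Lemma~\ref{lemm:compOrd} produce a short exact sequence of admissible $M$-representations
\begin{equation*}
0 \to \e_M(\sigma) \to \Ord_Q(\pi) \to \e_M(\sigma') \to 0.
\end{equation*}
Let $N_M$ denote the normal subgroup of $M$ generated by $U \cap M$; the argument of Lemma~\ref{lemm:perf} applied to $\Mb$ with its parabolic subgroup $\Lb(\Ub \cap \Mb)$ in place of $(\Gb,\Pb)$ shows that $N_M$ is perfect. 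The analogue of Lemma~\ref{lemm:Ext} for $\e_M$ then realises the above sequence as $\e_M(e)$ for a short exact sequence $e : 0 \to \sigma \to \tau \to \sigma' \to 0$ of $L$-representations, with $\tau$ \emph{a priori} trivial only on $L \cap N_M$. One then applies $\Ind_{\bar Q}^G$ to the iso $\e_M(\tau) \iso \Ord_Q(\pi)$ and composes with the counit~\eqref{counit} at $\pi$ to obtain a morphism $\Ind_{\bar Q}^G(\e_M(\tau)) \to \pi$; once this is shown to descend to $\St_{\bar Q}^G(\tau)$, Lemma~\ref{lemm:compSt} applied to both $\sigma$ and $\sigma'$ together with the five-lemma deliver the desired isomorphism $\St_{\bar Q}^G(\tau) \iso \pi$ of extensions.

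The main obstacle is thus a twofold descent: (i) showing that $\tau$ is actually trivial on the possibly larger subgroup $L \cap \GU$, so that $\St_{\bar Q}^G(\tau)$ is even defined, and (ii) verifying that the composites $\Ind_{\bar Q'}^G(\e_{M'}(\tau)) \hookrightarrow \Ind_{\bar Q}^G(\e_M(\tau)) \to \pi$ vanish for every standard parabolic $Q' \supsetneq Q$. By the adjunction between $\Ind_{\bar Q'}^G$ and $\Ord_{Q'}$, both conditions reduce to controlling $\Ord_{Q'}(\pi)$ for such $Q'$; this is where one will exploit Proposition~\ref{prop:H1Ord} in its full strength over all standard $Q' \supseteq Q$ in combination with the Bruhat-filtration computation behind~\eqref{Gr}.
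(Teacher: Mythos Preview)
Your injectivity is fine (the paper uses the shorter observation that full faithfulness of $\St_{\bar Q}^G$, Corollary~\ref{coro:StFF}, already forces any splitting of $\St_{\bar Q}^G(e)$ to descend). The gap is in surjectivity, at your obstacle (i). You claim that triviality of $\tau$ on $L\cap\GU$ ``reduces to controlling $\Ord_{Q'}(\pi)$'', but no such reduction is explained, and I do not see one: the adjunction you invoke addresses (ii) only after (i) is known, and in fact (ii) as stated already presupposes (i), since forming $\Ind_{\bar Q'}^G(\e_{M'}(\tau))$ requires $\e_{M'}(\tau)$ to exist, and for $Q'=G$ this \emph{is} condition (i). To see that (i) carries real content, take $P=Q$: then $M=L$, your $N_M$ is trivial, and $\tau=\Ord_Q(\pi)$ is an arbitrary admissible $L$-extension of $\sigma'$ by $\sigma$ with no a priori triviality on $L\cap\GU$; computing $\Ord_{Q'}(\pi)$ for larger $Q'$ gives information about $\pi$ as a $G$-representation, not about how $L\cap\GU$ acts on $\tau$.

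The paper resolves (i) by induction on $\lvert\Delta\setminus\Delta_\Mb\rvert$ rather than by a single application of $\Ord_Q$. Fixing $\alpha\in\Delta\setminus\Delta_\Mb$, one applies $\Ord_{Q^\alpha}$ for the maximal parabolic with $\Delta_{\Mb^\alpha}=\Delta\setminus\{\alpha\}$; via \cite[Theorem~6.1~(ii)]{AHV} and Proposition~\ref{prop:H1Ord} (or \cite[Theorem~1]{JHE} when $\car(F)=p$) this yields an extension of Steinbergs inside $M^\alpha$, and the induction hypothesis furnishes an $L$-extension $\eta$ trivial a priori only on $L\cap\langle\prescript{M^\alpha}{}{(M^\alpha\cap U)}\rangle$. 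The counit then exhibits $\pi$ as a quotient of $\Ind_{\bar Q^\alpha}^G(\St_{M^\alpha\cap\bar Q}^{M^\alpha}(\eta))$ with kernel an extension of $\St_{\bar Q_\alpha}^G(\sigma')$ by $\St_{\bar Q_\alpha}^G(\sigma)$ (where $\Delta_{\Mb_\alpha}=\Delta_\Mb\sqcup\{\alpha\}$); a \emph{second} use of the induction hypothesis, now for $Q_\alpha\subset G$, produces an $L$-extension $\kappa$ that \emph{is} trivial on $L\cap\GU$. Comparing $\bar N^\alpha$-coinvariants and invoking the injectivity half gives $\kappa\simeq\eta$, which retroactively upgrades the triviality of $\eta$. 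This double use of the induction hypothesis --- once inside a proper Levi, once for a strictly larger parabolic of $G$ --- is the missing mechanism in your outline.
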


\begin{proof}
The morphism is well defined since $\St_{\bar Q}^G$ is exact.
It is $R$-linear since $\St_{\bar Q}^G$ is, and it is injective since $\St_{\bar Q}^G$ is fully faithful by Corollary \ref{coro:StFF}.
We prove that it is surjective by induction on $|\Delta \backslash \Delta_\Mb|$.
If $\Delta_\Mb=\Delta$ (i.e.\@ $\Qb=\Mb=\Gb$), then the result is Lemma \ref{lemm:Ext}.
We assume $\Delta_\Mb \neq \Delta$ and that we know the result for relatively bigger parabolic subgroups.
Let
\begin{equation} \label{extpi}
0 \to \St_{\bar Q}^G(\sigma) \to \pi \to \St_{\bar Q}^G(\sigma') \to 0
\end{equation}
be a short exact sequence of admissible $R$-representations of $G$.
Pick $\alpha \in \Delta \backslash \Delta_\Mb$ and let $\Qb^\alpha=\Mb^\alpha\Nb^\alpha$ and $\Qb_\alpha=\Mb_\alpha\Nb_\alpha$ be the standard parabolic subgroups of $\Gb$ defined by $\Delta_{\Mb^\alpha} = \Delta \backslash \{\alpha\}$ and $\Delta_{\Mb_\alpha} = \Delta_\Mb \sqcup \{\alpha\}$ respectively.
We have $|\Delta_{\Mb^\alpha} \backslash \Delta_\Mb| = |\Delta \backslash \Delta_{\Mb_\alpha}| = |\Delta \backslash \Delta_\Mb|-1$.

Applying $\Ord_{Q^\alpha}$ to \eqref{extpi} and using \cite[Theorem 6.1 (ii)]{AHV} and Proposition \ref{prop:H1Ord} (if $\car(F)=0$) or \cite[Theorem 1]{JHE} (if $\car(F)=p$) yields a short exact sequence of admissible $R$-representations of $M^\alpha$
\begin{equation} \label{extOrda}
0 \to \St_{M^\alpha \cap \bar Q}^{M^\alpha}(\sigma) \to \Ord_{Q^\alpha}(\pi) \to \St_{M^\alpha \cap \bar Q}^{M^\alpha}(\sigma') \to 0.
\end{equation}
By the induction hypothesis, there exists a short exact sequence of admissible $R$-representations of $L/(L \cap \langle\prescript{M^\alpha}{}{(M^\alpha \cap U)}\rangle)$
\begin{equation} \label{exteta}
0 \to \sigma \to \eta \to \sigma' \to 0
\end{equation}
and a commutative diagram of admissible $R$-representations of $M^\alpha$
\begin{equation} \label{extOrdaiso}
\begin{tikzcd}
0 \rar & \St_{M^\alpha \cap \bar Q}^{M^\alpha}(\sigma) \dar[equal] \rar & \St_{M^\alpha \cap \bar Q}^{M^\alpha}(\eta) \dar{\vertiso} \rar & \St_{M^\alpha \cap \bar Q}^{M^\alpha}(\sigma') \dar[equal] \rar & 0 \\
0 \rar & \St_{M^\alpha \cap \bar Q}^{M^\alpha}(\sigma) \rar & \Ord_{Q^\alpha}(\pi) \rar & \St_{M^\alpha \cap \bar Q}^{M^\alpha}(\sigma') \rar & 0
\end{tikzcd}
\end{equation}
where the upper row is obtained by applying $\St_{M^\alpha \cap \bar Q}^{M^\alpha}$ to \eqref{exteta} and the lower row is \eqref{extOrda}.

Now, we have a commutative diagram of admissible $R$-representations of $G$
\begin{equation} \label{diag}
\begin{tikzcd}[column sep=tiny]
& 0 \dar & 0 \dar & 0 \dar & \\
0 \rar & \St_{\bar Q_\alpha}^G(\sigma) \dar \rar & \tau \dar \rar & \St_{\bar Q_\alpha}^G(\sigma') \dar \rar & 0 \\
0 \rar & \Ind_{\bar Q^\alpha}^G(\St_{M^\alpha \cap \bar Q}^{M^\alpha}(\sigma)) \dar \rar & \Ind_{\bar Q^\alpha}^G(\St_{M^\alpha \cap \bar Q}^{M^\alpha}(\eta)) \dar \rar & \Ind_{\bar Q^\alpha}^G(\St_{M^\alpha \cap \bar Q}^{M^\alpha}(\sigma')) \dar \rar & 0 \\
0 \rar & \St_{\bar Q}^G(\sigma) \rar \dar & \pi \rar \dar & \St_{\bar Q}^G(\sigma') \rar \dar & 0 \\
& 0 & 0 & 0 &
\end{tikzcd}
\end{equation}
whose rows and columns are exact.
The lower row is \eqref{extpi} and the middle row is obtained by applying $\Ind_{\bar Q^\alpha}^G$ to \eqref{extOrda} and using the middle vertical isomorphism of \eqref{extOrdaiso}.
The lower vertical arrows are induced by the counit of the adjunction between $\Ind_{\bar Q^\alpha}^G$ and $\Ord_{Q^\alpha}$ (the left and right ones are the natural surjections by an analogue of Lemma \ref{lemm:compSt} and the middle one is surjective by the five lemma).
The upper vertical arrows are the kernels of the lower vertical arrows (the left and right ones are the natural injections).

By the induction hypothesis, there exists a short exact sequence of admissible $R$-representations of $L/(L \cap \GU)$
\begin{equation} \label{extkappa}
0 \to \sigma \to \kappa \to \sigma' \to 0
\end{equation}
and a commutative diagram of admissible $R$-representations of $G$
\begin{equation} \label{exttauiso}
\begin{tikzcd}
0 \rar & \St_{\bar Q}^G(\sigma) \dar[equal] \rar & \St_{\bar Q}^G(\kappa) \dar{\vertiso} \rar & \St_{\bar Q}^G(\sigma') \dar[equal] \rar & 0 \\
0 \rar & \St_{\bar Q}^G(\sigma) \rar & \tau \rar & \St_{\bar Q}^G(\sigma') \rar & 0
\end{tikzcd}
\end{equation}
where the upper row is obtained by applying $\St_{M^\alpha \cap \bar Q}^{M^\alpha}$ to \eqref{extkappa} and the lower row is the upper row of \eqref{diag}.
Combining \eqref{exttauiso} with the upper and middle rows of \eqref{diag} yields a commutative diagram of admissible $R$-representations of $G$
\begin{equation*}
\begin{tikzcd}[column sep=small]
0 \rar & \St_{\bar Q}^G(\sigma) \dar[hook] \rar & \St_{\bar Q}^G(\kappa) \dar[hook] \rar & \St_{\bar Q}^G(\sigma') \dar[hook] \rar & 0 \\
0 \rar & \Ind_{\bar Q^\alpha}^G(\St_{M^\alpha \cap \bar Q}^{M^\alpha}(\sigma)) \rar & \Ind_{\bar Q^\alpha}^G(\St_{M^\alpha \cap \bar Q}^{M^\alpha}(\eta)) \rar & \Ind_{\bar Q^\alpha}^G(\St_{M^\alpha \cap \bar Q}^{M^\alpha}(\sigma')) \rar & 0
\end{tikzcd}
\end{equation*}
and passing to the $\bar N^\alpha$-coinvariants yields a commutative diagram of admissible $R$-representations of $M^\alpha$
\begin{equation*}
\begin{tikzcd}
0 \rar & \St_{M^\alpha \cap \bar Q}^{M^\alpha}(\sigma) \dar[equal] \rar & \St_{M^\alpha \cap \bar Q}^{M^\alpha}(\kappa) \dar{\vertiso} \rar & \St_{M^\alpha \cap \bar Q}^{M^\alpha}(\sigma') \dar[equal] \rar & 0 \\
0 \rar & \St_{M^\alpha \cap \bar Q}^{M^\alpha}(\sigma) \rar & \St_{M^\alpha \cap \bar Q}^{M^\alpha}(\eta) \rar & \St_{M^\alpha \cap \bar Q}^{M^\alpha}(\sigma') \rar & 0
\end{tikzcd}
\end{equation*}
whose rows are exact by \cite[Corollary 6.2 (i)]{AHV} (the left and right vertical morphisms are the identities by taking a closer look at \cite[§~6.2]{AHV} and the middle one is an isomorphism by the five lemma).
By the injectivity part of the statement, it is induced by applying $\St_{M^\alpha \cap \bar Q}^{M^\alpha}$ to a commutative diagram of admissible $R$-representations of $L/(L \cap \GU)$
\begin{equation*}
\begin{tikzcd}
0 \rar & \sigma \dar[equal] \rar & \kappa \dar{\vertiso} \rar & \sigma' \dar[equal] \rar & 0 \\
0 \rar & \sigma \rar & \eta \rar & \sigma' \rar & 0
\end{tikzcd}
\end{equation*}
where the upper row is \eqref{extkappa} and the lower row is \eqref{exteta}.
Applying $\St_{\bar Q}^G$ to the middle vertical isomorphism and precomposing the result with the inverse of the middle vertical isomorphism of \eqref{exttauiso} yields an isomorphism $\tau \simeq \St_{\bar Q}^G(\eta)$ via which the upper middle vertical arrow of \eqref{diag} is the natural injection.
Indeed, there are natural isomorphisms
\begin{align*}
\Hom_G(\St_{\bar Q}^G(\kappa),\Ind_{\bar Q^\alpha}(\St_{M^\alpha \cap \bar Q}^{M^\alpha}(\eta))) &\simeq \Hom_{M^\alpha}(\St_{M^\alpha \cap \bar Q}^{M^\alpha}(\kappa),\St_{M^\alpha \cap \bar Q}^{M^\alpha}(\eta)) \\
&\simeq \Hom_L(\kappa,\eta)
\end{align*}
(the first one is induced by the adjunction between $(-)_{\bar N^\alpha}$ and $\Ind_{\bar Q^\alpha}^G$ together with \cite[Theorem 6.1 (i)]{AHV} and the second one is follows from Corollary \ref{coro:StFF}).

We conclude that the middle and lower rows of \eqref{diag} yield a commutative diagram of admissible $R$-representations of $G$
\begin{equation*}
\begin{tikzcd}
0 \rar & \St_{\bar Q}^G(\sigma) \dar[equal] \rar & \St_{\bar Q}^G(\eta) \dar{\vertiso} \rar & \St_{\bar Q}^G(\sigma') \dar[equal] \rar & 0 \\
0 \rar & \St_{\bar Q}^G(\sigma) \rar & \pi \rar & \St_{\bar Q}^G(\sigma') \rar & 0
\end{tikzcd}
\end{equation*}
where the upper row is obtained by applying $\St_{\bar Q}^G$ to \eqref{exteta} and the lower row is \eqref{extpi}.
\end{proof}

\section{\texorpdfstring{$I$-adically continuous representations}{I-adically continuous representations}}

Let $I$ be an ideal of $R$.
We write $\Mod_G^{I-\cont}(R)$ for the category of $I$-adically continuous $R$-representations of $G$ (i.e.\@ $I$-adically complete and separated $R[G]$-modules $\pi$ such that the map $G \times \pi \to \pi$ is jointly continuous when $\pi$ is given its $I$-adic topology, or equivalently the $R/I^n$-representations $\pi/I^n\pi$ of $G$ are smooth for all $n \geq 1$) and $R[G]$-linear maps.
We write $\Mod_G^{I-\adm}(R)$ for the full subcategory of $\Mod_G^{I-\cont}(R)$ consisting of admissible representations (i.e.\@ those representations $\pi$ such that the smooth $R/I^n$-representations $\pi/I^n\pi$ of $G$ are admissible for all $n \geq 1$).
If $I$ is nilpotent, then $\Mod_G^{I-\cont}(R)=\Mod_G^\infty(R)$ and $\Mod_G^{I-\adm}(R)=\Mod_G^\adm(R)$.

We assume $I$ finitely generated.
Given an $I$-adically continuous $R$-representation $\sigma$ of $L$ trivial on $L \cap \GU$, we set
\begin{equation*}
\St_{\bar Q}^G(\sigma) \coloneqq \varprojlim_{n \geq 1} \St_{\bar Q}^G(\sigma/I^n\sigma).
\end{equation*}
Using \eqref{St} with $R/I^n$ and $\sigma/I^n\sigma$ instead of $R$ and $\sigma$ respectively for all $n \geq 1$, we see that $(\St_{\bar Q}^G(\sigma/I^n\sigma))_{n \geq 1}$ is an $I$-adic system of $R$-modules (\cite[Definition 2.1]{Yek2}).
By \cite[Theorem 2.8]{Yek2}, $\St_{\bar Q}^G(\sigma)$ is an $I$-adically continuous $R$-representation of $G$ and there are isomorphisms
\begin{equation} \label{StI}
\St_{\bar Q}^G(\sigma) / I^n \St_{\bar Q}^G(\sigma) \simeq \St_{\bar Q}^G(\sigma / I^n \sigma)
\end{equation}
for all $n \geq 1$.
Thus we obtain an $R$-linear functor
\begin{equation*}
\St_{\bar Q}^G : \Mod_{L/(L \cap \GU)}^{I-\cont}(R) \to \Mod_G^{I-\cont}(R)
\end{equation*}
which respects admissibility when $R/I$ is noetherian and $p$ is nilpotent in $R/I$.

\begin{prop} \label{prop:StIFF}
Assume that $I$ is finitely generated, $R/I$ is noetherian, and $p$ is nilpotent in $R/I$.
The functor $\St_{\bar Q}^G : \Mod_{L/(L \cap \GU)}^{I-\adm}(R) \to \Mod_G^{I-\adm}(R)$ is fully faithful.
\end{prop}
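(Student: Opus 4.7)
The plan is to bootstrap from Corollary \ref{coro:StFF} by applying it modulo each power $I^n$ and then passing to the inverse limit, using the identification \eqref{StI} at each level.

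First I would check that the hypotheses propagate to every quotient: for each $n \geq 1$, the ring $R/I^n$ is noetherian and $p$ is nilpotent in $R/I^n$. Noetherianity is a standard induction using the short exact sequences $0 \to I^{n-1}/I^n \to R/I^n \to R/I^{n-1} \to 0$ together with the fact that $I^{n-1}/I^n$ is a finitely generated $R/I$-module because $I$ is finitely generated. Nilpotence of $p$ in $R/I^n$ follows from nilpotence of $p$ in $R/I$: if $p^m \in I$ then $p^{mn} \in I^n$. Consequently, for admissible $\sigma, \sigma' \in \Mod_{L/(L \cap \GU)}^{I-\adm}(R)$, the quotients $\sigma/I^n\sigma$ and $\sigma'/I^n\sigma'$ lie in $\Mod_{L/(L \cap \GU)}^\adm(R/I^n)$, and Corollary \ref{coro:StFF} applies over $R/I^n$ to give a natural $R/I^n$-linear isomorphism
\begin{equation*}
\Hom_L(\sigma/I^n\sigma, \sigma'/I^n\sigma') \iso \Hom_G(\St_{\bar Q}^G(\sigma/I^n\sigma), \St_{\bar Q}^G(\sigma'/I^n\sigma')),
\end{equation*}
which by \eqref{StI} rewrites as
\begin{equation*}
\Hom_L(\sigma/I^n\sigma, \sigma'/I^n\sigma') \iso \Hom_G(\St_{\bar Q}^G(\sigma)/I^n\St_{\bar Q}^G(\sigma), \St_{\bar Q}^G(\sigma')/I^n\St_{\bar Q}^G(\sigma')).
\end{equation*}

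Next I would establish the general lemma that for $\pi, \pi' \in \Mod_G^{I-\cont}(R)$ the natural map
\begin{equation*}
\Hom_G(\pi, \pi') \iso \varprojlim_{n \geq 1} \Hom_G(\pi/I^n\pi, \pi'/I^n\pi')
\end{equation*}
is an $R$-linear isomorphism. This is a routine consequence of $I$-adic completeness and separatedness: any $R[G]$-linear map $\pi \to \pi'$ descends to compatible maps modulo $I^n$, and conversely any compatible family induces a well-defined $R[G]$-linear map via $\pi = \varprojlim \pi/I^n\pi \to \varprojlim \pi'/I^n\pi' = \pi'$. The resulting map is automatically continuous because its reductions mod $I^n$ are smooth $R/I^n$-equivariant.

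Applying this Hom/$\varprojlim$ identification to both sides, for $\sigma, \sigma'$ and for $\St_{\bar Q}^G(\sigma), \St_{\bar Q}^G(\sigma')$, and taking the inverse limit over $n$ of the level-wise isomorphism above, yields the desired bijection $\Hom_L(\sigma, \sigma') \iso \Hom_G(\St_{\bar Q}^G(\sigma), \St_{\bar Q}^G(\sigma'))$, which is automatically $R$-linear and natural. The main obstacle, such as it is, lies in the Hom/$\varprojlim$ formula: one has to verify that the continuity condition in $\Mod_G^{I-\cont}(R)$ is equivalent to having smooth reductions modulo each $I^n$, so that passing to the limit of smooth $R/I^n$-linear maps lands in the continuous category. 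Once this bookkeeping is in place, the rest is a direct application of Corollary \ref{coro:StFF} together with \eqref{StI}.
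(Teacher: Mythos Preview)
Your proposal is correct and follows essentially the same route as the paper: reduce modulo $I^n$, invoke Corollary~\ref{coro:StFF} over each $R/I^n$ via \eqref{StI}, and pass to the inverse limit using $I$-adic completeness and separatedness on both sides. The paper compresses the propagation of the hypotheses to the single phrase ``by induction'' and the $\Hom$/$\varprojlim$ identification to ``follow from the fact that $\St_{\bar Q}^G(\sigma)$ and $\sigma$ are $I$-adically complete and separated,'' but the argument is the same as yours.
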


\begin{proof}
By induction, $R/I^n$ is noetherian and $p$ is nilpotent in $R/I^n$ for all $n \geq 1$.
There are natural $R$-linear isomorphisms
\begin{align*}
\Hom_G(\St_{\bar Q}^G(\sigma'),\St_{\bar Q}^G(\sigma)) &\simeq \varprojlim_{n \geq 1} \Hom_G(\St_{\bar Q}^G(\sigma')/I^n\St_{\bar Q}^G(\sigma'),\St_{\bar Q}^G(\sigma)/I^n\St_{\bar Q}^G(\sigma)) \\
&\simeq \varprojlim_{n \geq 1} \Hom_G(\St_{\bar Q}^G(\sigma'/I^n\sigma'),\St_{\bar Q}^G(\sigma/I^n\sigma)) \\
&\simeq \varprojlim_{n \geq 1} \Hom_L(\sigma'/I^n\sigma',\sigma/I^n\sigma) \\
&\simeq \Hom_L(\sigma',\sigma)
\end{align*}
(the first and last ones follow from the fact that $\St_{\bar Q}^G(\sigma)$ and $\sigma$ are $I$-adically complete and separated, the second one follows from \eqref{StI} and its analogue for $\sigma'$, and the third one follows from Corollary \ref{coro:StFF} with $R/I^n$ instead of $R$ for all $n \geq 1$).
\end{proof}

\section{Artinian deformations}

Let $E$ be a finite extension of $\Q_p$.
We let $\Oc$ denote its ring of integers and $k$ denote its residue field.
Given a local $\Oc$-algebra $A$, we let $\mf_A$ denote its maximal ideal.
We write $\Art(\Oc)$ for the category of artinian local $\Oc$-algebras $A$ such that the structural morphism $\Oc \to A$ induces an isomorphism $k \iso A/\mf_A$.
The morphisms $A \to A'$ are the (local) $\Oc$-algebra homomorphisms.

A lift of a smooth $k$-representation $\bar\pi$ of $G$ over $A \in \Art(\Oc)$ is a pair $(\pi,\phi)$ where $\pi$ is a smooth $A$-representation of $G$ free over $A$ and $\phi : \pi \twoheadrightarrow \bar\pi$ is a surjection with kernel $\mf_A\pi$.
A morphism $\iota : (\pi,\phi) \to (\pi',\phi')$ is a morphism $\iota : \pi \to \pi'$ such that $\phi = \phi' \circ \iota$.
For $A \in \Art(\Oc)$, we let $\Def_{\bar\pi}(A)$ denote the set of isomorphism classes of lifts of $\bar\pi$ over $A$.
If $(\pi,\phi)$ is a lift of $\bar\pi$ over $A \in \Art(\Oc)$ and $A \to A'$ is a morphism in $\Art(\Oc)$, then $(\pi \otimes_A A',\phi')$ is naturally a lift of $\bar\pi$ over $A'$, where $\phi' : \pi \otimes_A A' \twoheadrightarrow \bar\pi$ is the morphism induced by $\phi$.
Thus we obtain a functor $\Def_{\bar\pi} : \Art(\Oc) \to \Set$.

Let $\bar\sigma$ be a smooth $k$-representation of $L$ trivial on $L \cap \GU$ and set $\bar\pi \coloneqq \St_{\bar Q}^G(\bar\sigma)$.
For $A \in \Art(\Oc)$, we consider the set of deformations $\Def_{\bar\sigma}(A)$ of $\bar\sigma$ over $A$ as a smooth $k$-representation of $L/(L \cap \GU)$.
We obtain a functor $\Def_{\bar\sigma} : \Art(\Oc) \to \Set$.
Using \eqref{St} with $R=A$ and the fact that $\St_{\bar Q}^G(A)$ is free over $A$, we see that:
\begin{itemize}
\item $\St_{\bar Q}^G(\sigma)$ is free over $A$ if and only if $\sigma$ is,
\item $\St_{\bar Q}^G$ is compatible with base change: for any morphism $A \to A'$ in $\Art(\Oc)$, there is a natural isomorphism $\St_{\bar Q}^G(\sigma) \otimes_A A' \simeq \St_{\bar Q}^G(\sigma \otimes_A A')$.
\end{itemize}
Thus the functor $\St_{\bar Q}^G$ induces a natural transformation
\begin{equation} \label{StDef}
\St_{\bar Q}^G : \Def_{\bar\sigma} \to \Def_{\bar\pi}.
\end{equation}

\begin{prop} \label{prop:StDef}
If $\bar\sigma$ is admissible, then \eqref{StDef} is an isomorphism of functors $\Art(\Oc) \to \Set$.
\end{prop}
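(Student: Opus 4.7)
The plan is to prove both injectivity and surjectivity of $\St_{\bar Q}^G : \Def_{\bar\sigma}(A) \to \Def_{\bar\pi}(A)$ for each $A \in \Art(\Oc)$. Injectivity is the easier half: given two lifts $(\sigma_1,\phi_1),(\sigma_2,\phi_2)$ of $\bar\sigma$ over $A$ whose images under $\St_{\bar Q}^G$ are isomorphic as lifts of $\bar\pi$, Corollary \ref{coro:StFF} produces a unique $j : \sigma_1 \to \sigma_2$ inducing the given isomorphism; the relation $\phi_2 \circ j = \phi_1$ then follows from faithfulness of $\St_{\bar Q}^G$ (noting that $\St_{\bar Q}^G(\phi_i)$ is the reduction map for $\St_{\bar Q}^G(\sigma_i)$ by the base-change compatibility of \eqref{St}), and $j$ is an isomorphism by Nakayama's lemma since it reduces to the identity modulo $\mf_A$.

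For surjectivity I would induct on the $\Oc$-length of $A$. The base case $A = k$ is immediate. For the inductive step, pick a surjection $A' \twoheadrightarrow A$ in $\Art(\Oc)$ with kernel $(t) \simeq k$ satisfying $t\mf_{A'} = 0$, and let $(\pi',\phi')$ be a lift of $\bar\pi$ over $A'$. Its reduction $\pi \coloneqq \pi' \otimes_{A'} A$ is a lift of $\bar\pi$ over $A$, and by the inductive hypothesis is of the form $\St_{\bar Q}^G(\sigma)$ for some lift $\sigma$ of $\bar\sigma$ over $A$. Multiplication by $t$ identifies $\bar\pi$ with the kernel of $\pi' \twoheadrightarrow \pi$, yielding a short exact sequence
\begin{equation*}
0 \to \bar\pi \to \pi' \to \pi \to 0
\end{equation*}
in $\Mod_G^\adm(A')$, where $\bar\pi$ and $\pi$ are viewed as $A'$-representations via $A' \to k$ and $A' \to A$ respectively. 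Since $p$ is nilpotent in $A'$, Proposition \ref{prop:Ext} applied with $R = A'$ identifies this extension with the image under $\St_{\bar Q}^G$ of a short exact sequence $0 \to \bar\sigma \to \sigma' \to \sigma \to 0$ of admissible $A'$-representations of $L$ trivial on $L \cap \GU$.

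The main obstacle is then to verify that $\sigma'$, equipped with the natural quotient to $\bar\sigma$, is a genuine lift of $\bar\sigma$ over $A'$, i.e., that $\sigma'$ is $A'$-free with reduction $\bar\sigma$ modulo $\mf_{A'}$. For freeness I would exploit the decomposition \eqref{St}: as $A'$-modules one has $\pi' \simeq \St_{\bar Q}^G(A') \otimes_{A'} \sigma'$, and since $\St_{\bar Q}^G(A')$ is $A'$-free by \cite[Corollaire 5.6]{LySt}, the right-hand side is a direct sum of copies of $\sigma'$; as $\pi'$ is $A'$-free, $\sigma'$ is a direct summand of a free $A'$-module, hence $A'$-projective, hence $A'$-free by Kaplansky's theorem. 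For the reduction, applying the base-change compatibility of $\St_{\bar Q}^G$ together with the injectivity already proved yields $\sigma' \otimes_{A'} A \simeq \sigma$ compatibly with the quotient to $\bar\sigma$, so that $\sigma'/\mf_{A'}\sigma' \simeq \bar\sigma$. Finally, the isomorphism $\St_{\bar Q}^G(\sigma') \simeq \pi'$ produced by Proposition \ref{prop:Ext} is automatically compatible with the reduction maps to $\bar\pi$, since both sides factor through the common quotient $\pi$ identified in the extension diagram.
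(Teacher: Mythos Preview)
Your overall strategy matches the paper's: injectivity via full faithfulness, and surjectivity by induction on the length of $A$ together with Proposition~\ref{prop:Ext}. Your injectivity argument via Corollary~\ref{coro:StFF} and an artinian Nakayama argument is correct; the paper instead applies $\Ord_Q$ together with Lemma~\ref{lemm:compOrd} to recover the isomorphism $\sigma_1 \iso \sigma_2$ in one stroke (this has the mild advantage of only needing $\bar\sigma$ locally $Z_M$-finite rather than admissible).

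There is, however, a genuine circularity in your reduction step for surjectivity. You invoke ``the injectivity already proved'' to conclude $\sigma' \otimes_{A'} A \simeq \sigma$; but that injectivity is a statement about the map $\Def_{\bar\sigma}(A) \to \Def_{\bar\pi}(A)$, and to place $\sigma' \otimes_{A'} A$ in $\Def_{\bar\sigma}(A)$ you must already equip it with a reduction isomorphism $(\sigma' \otimes_{A'} A)/\mf_A \simeq \bar\sigma$, i.e.\ $\sigma'/\mf_{A'}\sigma' \simeq \bar\sigma$ --- precisely what you are trying to establish. The fix is easy and does not require the deformation-theoretic injectivity at all. Since on the $\pi'$-side the image of $\bar\pi \hookrightarrow \pi'$ is exactly $t\pi'$, and since $\St_{\bar Q}^G$ is $A'$-linear, exact, and faithful (hence reflects equality of subobjects), the image of $\bar\sigma \hookrightarrow \sigma'$ equals $t\sigma'$; thus $\sigma'/t\sigma' \simeq \sigma$ directly from the short exact sequence, and reducing further modulo $\mf_A$ gives $\sigma'/\mf_{A'}\sigma' \simeq \bar\sigma$. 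Alternatively (and this is the paper's argument), once $\sigma'$ is known to be free over $A'$, the image of $\bar\sigma$ in $\sigma'$ is annihilated by $\mf_{A'}$ and therefore lies in $\mathrm{ann}_{A'}(\mf_{A'}) \cdot \sigma' \subseteq \mf_{A'}\sigma'$ (using $A' \neq k$), so the composite $\sigma' \twoheadrightarrow \sigma \twoheadrightarrow \bar\sigma$ has kernel exactly $\mf_{A'}\sigma'$.
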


\begin{proof}
Let $A \in \Art(\Oc)$.
We prove that \eqref{StDef} induces a bijection between the $A$-points.

We start with injectivity.
Assume that $\bar\sigma$ is locally $Z_M$-finite (this is automatically true if $\bar\sigma$ is admissible, see \cite[Lemma 3.6]{VigAdj}).
Let $(\sigma,\psi)$ and $(\sigma',\psi')$ be two lifts of $\bar\sigma$ over $A$ such that there exists an isomorphism $\imath : \St_{\bar Q}^G(\sigma) \iso \St_{\bar Q}^G(\sigma')$ such that $\St_{\bar Q}^G(\psi) = \St_{\bar Q}^G(\psi') \circ \imath$.
By \cite[Lemma 2.12 (1)]{HSS}, $\sigma$ and $\sigma'$ are also locally $Z_M$-finite.
Thus applying $\Ord_Q$ and using \eqref{compOrd} and its analogue for $\sigma'$ yields an isomorphism $\jmath : \sigma \iso \sigma'$ such that $\psi = \psi' \circ \jmath$.

We now turn to surjectivity.
Assume that $\bar\sigma$ is admissible.
We proceed by induction on the length of $A$.
The base case $A=k$ is trivial.
Assume $A \neq k$ and that we know surjectivity for rings of smaller length.
Pick $a \in A$ non-zero such that $a\mf_A=0$ and set $A' \coloneqq A/aA$, so that $\ell(A') = \ell(A)-1$.
Let $(\pi,\phi)$ be a lift of $\bar\pi$ over $A$ and set $\pi' \coloneqq \pi/a\pi$.
We set $\pi' \coloneqq \pi/a\pi$ which is free over $A'$.
Since $a\pi \subseteq \mf_A\pi$, $\phi$ factors through a surjection $\phi' : \pi' \twoheadrightarrow \bar\pi$ whose kernel is $\mf_{A'}\pi'$, so that $(\pi',\phi')$ is a lift of $\bar\pi$ over $A'$.
The multiplication by $a$ induces an isomorphism $\bar\pi \iso a\pi$, hence a short exact sequence of admissible $A$-representations of $G$
\begin{equation} \label{devpi}
0 \to \bar\pi \to \pi \to \pi' \to 0.
\end{equation}
By the induction hypothesis, there exists a lift $(\sigma',\psi')$ of $\bar\sigma$ over $A'$ and an isomorphism $\iota' : \St_{\bar Q}^G(\sigma') \iso \pi'$ such that $\St_{\bar Q}^G(\psi') = \phi' \circ \iota'$.
We deduce from Proposition \ref{prop:Ext} that there exists a short exact sequence of admissible $A$-representations of $L$
\begin{equation} \label{devsigma}
0 \to \bar\sigma \to \sigma \to \sigma' \to 0
\end{equation}
and an isomorphism $\iota : \St_{\bar Q}^G(\sigma) \iso \pi$ such that the diagram of admissible $A$-representations of $G$
\begin{equation} \label{deviso}
\begin{tikzcd}
0 \rar & \St_{\bar Q}^G(\bar\sigma) \dar[equal] \rar & \St_{\bar Q}^G(\sigma) \arrow[d,"\vertiso","\iota"'] \rar & \St_{\bar Q}^G(\sigma') \arrow[d,"\vertiso","\iota'"'] \rar & 0 \\
0 \rar & \bar\pi \rar & \pi \rar & \pi' \rar & 0
\end{tikzcd}
\end{equation}
where the upper row is obtained by applying $\St_{\bar Q}^G$ to \eqref{devsigma} and the lower row is \eqref{devpi}, is commutative.
In particular, $\St_{\bar Q}^G(\sigma)$ is free over $A$ so that $\sigma$ is also free over $A$.
Since $\mf_A\bar\sigma=0$ and $A \neq k$, the image of the second arrow of \eqref{devsigma} lies in $\mf_A\sigma$.
Thus, if we define $\psi : \sigma \twoheadrightarrow \bar\sigma$ to be the third arrow of \eqref{devsigma} composed with $\psi'$, then $\psi$ is surjective with kernel $\mf_A\sigma$, i.e.\@ $(\sigma,\psi)$ is a lift of $\bar\sigma$.
Moreover, there is a commutative diagram of admissible $A$-representations of $G$
\begin{equation*}
\begin{tikzcd}
\St_{\bar Q}^G(\sigma) \arrow[d,"\vertiso","\iota"'] \rar[two heads] & \St_{\bar Q}^G(\sigma') \arrow[d,"\vertiso","\iota'"'] \rar[two heads]{\psi'} & \St_{\bar Q}^G(\bar\sigma) \dar[equal] \\
\pi \rar[two heads] & \pi' \rar[two heads]{\phi'} & \bar\pi
\end{tikzcd}
\end{equation*}
where the left square is the right square of \eqref{deviso}.
By definition, the upper horizontal composite is $\psi$, and the lower composite is $\phi$, hence $\St_{\bar Q}^G(\psi) = \phi \circ \iota$.
\end{proof}

\section{Pro-representability}

Let $\Pro(\Oc)$ be the category of profinite local $\Oc$-algebras $A$ such that the structural morphism $\Oc \to A$ is local and induces an isomorphism $k \iso A/\mf_A$.
The morphisms $A \to A'$ are the continuous $\Oc$-algebra homomorphisms.
Note that $\Art(\Oc)$ is the full subcategory of $\Pro(\Oc)$ consisting of (discrete) artinian rings.
Moreover, $\Pro(\Oc)$ is equivalent to the category of pro-objects of $\Art(\Oc)$ (\cite[Lemma 3.3]{Schm}).

Let $\bar\pi$ be a smooth $k$-representation of $G$.
We say that $\Def_{\bar\pi}$ is pro-representable if there exists a universal deformation ring $R_{\bar\pi}^\univ \in \Pro(\Oc)$ so that there is a natural bijection
\begin{equation} \label{pro-rep}
\Hom_{\Pro(\Oc)}(R_{\bar\pi}^\univ,A) \iso \Def_{\bar\pi}(A)
\end{equation}
for all $A \in \Art(\Oc)$.
If $\End_G(\bar\pi)=k$, then $\Def_{\bar\pi}$ is pro-representable by \cite[Theorem 3.8]{Schm} whose proof works for any locally profinite group $G$.

\begin{coro} \label{coro:pro-rep}
Let $\bar\sigma$ be a smooth $k$-representation of $L$ trivial on $L \cap \GU$ and set $\bar\pi \coloneqq \St_{\bar Q}^G(\bar\sigma)$.
Assume that $\Def_{\bar\sigma}$ is pro-representable (e.g.\@ $\End_L(\bar\sigma)=k$).
If $\bar\sigma$ is admissible, then $\Def_{\bar\pi}$ is also pro-representable and there is an isomorphism $R_{\bar\pi}^\univ \simeq R_{\bar\sigma}^\univ$ in $\Pro(\Oc)$.
\end{coro}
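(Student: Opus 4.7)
The plan is to deduce this corollary directly from Proposition \ref{prop:StDef} by an application of the Yoneda lemma in $\Pro(\Oc)$. First I would unpack the hypothesis: pro-representability of $\Def_{\bar\sigma}$ means there exists $R_{\bar\sigma}^\univ \in \Pro(\Oc)$ together with a natural isomorphism
\begin{equation*}
\Hom_{\Pro(\Oc)}(R_{\bar\sigma}^\univ,-) \iso \Def_{\bar\sigma}
\end{equation*}
of functors $\Art(\Oc) \to \Set$. On the other hand, since $\bar\sigma$ is admissible, Proposition \ref{prop:StDef} provides a natural isomorphism $\St_{\bar Q}^G : \Def_{\bar\sigma} \iso \Def_{\bar\pi}$ of functors $\Art(\Oc) \to \Set$.

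Composing these two natural isomorphisms gives, for every $A \in \Art(\Oc)$, a natural bijection
\begin{equation*}
\Hom_{\Pro(\Oc)}(R_{\bar\sigma}^\univ,A) \iso \Def_{\bar\pi}(A),
\end{equation*}
which is exactly the defining property \eqref{pro-rep} for $\Def_{\bar\pi}$ with universal deformation ring $R_{\bar\sigma}^\univ$. Therefore $\Def_{\bar\pi}$ is pro-representable, and by uniqueness (up to unique isomorphism) of the representing object in $\Pro(\Oc)$ — equivalently, by the equivalence between $\Pro(\Oc)$ and $\Pro(\Art(\Oc))$ recalled before the statement — we obtain the desired isomorphism $R_{\bar\pi}^\univ \simeq R_{\bar\sigma}^\univ$ in $\Pro(\Oc)$.

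Given that all the substantive work has been done in Proposition \ref{prop:StDef} (both the injectivity via ordinary parts and the surjectivity via the inductive extension argument), there is no real obstacle here: the corollary is a purely formal consequence. The only thing worth flagging is the parenthetical hypothesis $\End_L(\bar\sigma)=k$, which ensures pro-representability of $\Def_{\bar\sigma}$ by invoking \cite[Theorem 3.8]{Schm}, whose proof is valid for any locally profinite group and in particular applies to $L$.
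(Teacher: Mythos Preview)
Your proposal is correct and matches the paper's approach: the paper states the corollary without proof, treating it as an immediate formal consequence of Proposition \ref{prop:StDef} together with the definition of pro-representability and the Yoneda-type uniqueness of representing objects in $\Pro(\Oc)$. Your write-up simply makes this explicit.
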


\section{Noetherian deformations}

Let $\Noe(\Oc)$ be the category of noetherian complete local $\Oc$-algebras $A$ such that the structural morphism $\Oc \to A$ is local and induces an isomorphism $k \iso A/\mf_A$.
The morphisms $A \to A'$ are the local $\Oc$-algebra homomorphisms.
If $A \in \Pro(\Oc)$ is noetherian, then the profinite topology is the $\mf_A$-adic topology (\cite[Proposition 22.5]{pLG}).
Moreover, a morphism $A \to A'$ between noetherian rings in $\Pro(\Oc)$ is continuous if and only if it is local.
Thus $\Noe(\Oc)$ is the full subcategory of $\Pro(\Oc)$ consisting of noetherian rings.

A lift of a smooth $k$-representation $\bar\pi$ of $G$ over $A \in \Noe(\Oc)$ is a pair $(\pi,\phi)$ where $\pi$ is an $\mf_A$-adically continuous $A$-representation of $G$ orthonormalisable over $A$ (i.e.\@ the $A/\mf_A^n$-modules $\pi/\mf_A^n\pi$ are free for all $n \geq 1$) and $\phi : \pi \twoheadrightarrow \bar\pi$ is a surjection with kernel $\mf_A\pi$.
A morphism $\iota : (\pi,\phi) \to (\pi',\phi')$ is a morphism $\iota : \pi \to \pi'$ such that $\phi = \phi' \circ \iota$.
If $(\pi,\phi)$ is a lift of $\bar\pi$ over $A \in \Noe(\Oc)$ and $A \to A'$ is a morphism in $\Noe(\Oc)$, then $(\pi \otimesh_A A',\phi')$ is naturally a lift of $\bar\pi$ over $A'$, where the completed tensor product is defined by
\begin{equation*}
\pi \otimesh_A A' \coloneqq \varprojlim_{n \geq 1} \pi/\mf_A^n\pi \otimes_{A/\mf_A^n} A'/\mf_{A'}^n
\end{equation*}
and $\phi' : \pi \otimesh_A A' \twoheadrightarrow \bar\pi$ is the morphism induced by $\phi$.
Thus $\Def_{\bar\pi}$ extends to a functor $\Noe(\Oc) \to \Set$.

Let $\bar\sigma$ be a smooth $k$-representation of $L$ trivial on $L \cap \GU$ and set $\bar\pi \coloneqq \St_{\bar Q}^G(\bar\sigma)$.
Likewise $\Def_{\bar\sigma}$ extends to a functor $\Noe(\Oc) \to \Set$.
Using \eqref{StI}, we see that:
\begin{itemize}
\item $\St_{\bar Q}^G(\sigma)$ is orthonormalisable over $A$ if and only if $\sigma$ is,
\item $\St_{\bar Q}^G$ is compatible with base change: for any morphism $A \to A'$ in $\Noe(\Oc)$, there is a natural isomorphism $\St_{\bar Q}^G(\sigma) \otimesh_A A' \simeq \St_{\bar Q}^G(\sigma \otimesh_A A')$.
\end{itemize}
Thus \eqref{StDef} extends to a natural transformation between functors $\Noe(\Oc) \to \Set$.

\begin{prop} \label{prop:StIDef}
If $\bar\sigma$ is admissible, then \eqref{StDef} is an isomorphism of functors $\Noe(\Oc) \to \Set$.
\end{prop}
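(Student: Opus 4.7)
The strategy is to bootstrap from the artinian case (Proposition~\ref{prop:StDef}) by reducing modulo $\mf_A^n$ for $A \in \Noe(\Oc)$ and passing to the inverse limit. Injectivity will follow directly from the fully faithfulness established in Proposition~\ref{prop:StIFF}.

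\emph{Injectivity.} Let $(\sigma,\psi)$, $(\sigma',\psi')$ be two lifts of $\bar\sigma$ over $A \in \Noe(\Oc)$ together with an isomorphism $\iota$ of their images under $\St_{\bar Q}^G$. Since $\bar\sigma$ is admissible and $\sigma$ is orthonormalisable over $A$, the $\mf_A$-adic graded pieces of $\sigma$ are direct sums of copies of $\bar\sigma$, so each $\sigma/\mf_A^n\sigma$ is admissible over $A/\mf_A^n$, and similarly for $\sigma'$. Proposition~\ref{prop:StIFF} applied with $I = \mf_A$ (finitely generated since $A$ is noetherian, with $p = 0$ in the field $A/\mf_A = k$) then produces a unique morphism $\jmath : \sigma \to \sigma'$ with $\St_{\bar Q}^G(\jmath) = \iota$; running the same argument with $\iota^{-1}$ shows $\jmath$ is an isomorphism, and faithfulness of $\St_{\bar Q}^G$ forces $\psi = \psi' \circ \jmath$.

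\emph{Surjectivity.} Given a lift $(\pi,\phi)$ of $\bar\pi$ over $A$, set $A_n \coloneqq A/\mf_A^n$ and $\pi_n \coloneqq \pi/\mf_A^n\pi$; then $(\pi_n,\phi_n) \in \Def_{\bar\pi}(A_n)$ is a compatible system along the reductions. By Proposition~\ref{prop:StDef} applied over each $A_n \in \Art(\Oc)$, each $(\pi_n,\phi_n)$ is isomorphic to $\St_{\bar Q}^G(\sigma_n,\psi_n)$ for some $(\sigma_n,\psi_n) \in \Def_{\bar\sigma}(A_n)$; combining the injectivity part of the same proposition with naturality of \eqref{StDef}, we inductively select these compatibly, equipping them with isomorphisms of lifts $\alpha_n : \sigma_{n+1} \otimes_{A_{n+1}} A_n \iso \sigma_n$ and $\iota_n : \St_{\bar Q}^G(\sigma_n) \iso \pi_n$ that commute with the reductions. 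Set $\sigma \coloneqq \varprojlim_n \sigma_n$ along the induced surjections $\sigma_{n+1} \twoheadrightarrow \sigma_n$ with kernel $\mf_A^n\sigma_{n+1}$; standard properties of inverse limits show that $\sigma$ is an $\mf_A$-adically continuous $A$-representation of $L/(L \cap \GU)$, orthonormalisable over $A$, with $\sigma/\mf_A^n\sigma \simeq \sigma_n$, and the $\psi_n$ assemble into $\psi : \sigma \twoheadrightarrow \bar\sigma$ making $(\sigma,\psi)$ a lift. Using the definition of $\St_{\bar Q}^G$ on $\mf_A$-adically continuous representations together with \eqref{StI},
\begin{equation*}
\St_{\bar Q}^G(\sigma) = \varprojlim_n \St_{\bar Q}^G(\sigma/\mf_A^n\sigma) = \varprojlim_n \St_{\bar Q}^G(\sigma_n) \iso \varprojlim_n \pi_n = \pi,
\end{equation*}
compatibly with $\psi$ and $\phi$.

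\emph{Main obstacle.} The only point of real substance is coherently assembling the artinian lifts into an inverse system: Proposition~\ref{prop:StDef} yields the triples $(\sigma_n,\psi_n,\iota_n)$ only up to isomorphism of lifts, so at each inductive stage a freshly chosen lift over $A_{n+1}$ must be corrected by an automorphism so that both its underlying module and its comparison isomorphism reduce on the nose to $\sigma_n$ and $\iota_n$. Existence of such corrections is guaranteed by the injectivity part of Proposition~\ref{prop:StDef}, but pinning down the two adjustments simultaneously at every step is the one piece of bookkeeping that requires care.
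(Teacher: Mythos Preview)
Your proposal is correct and follows the same inverse-limit-from-artinian strategy that the paper invokes by reference to \cite[Lemma 3.14 and Theorem 3.15]{HSS}; you have simply written out what the paper leaves implicit. One small remark: in the surjectivity step, what you need at each inductive stage is not merely the injectivity of \eqref{StDef} on isomorphism classes but the full faithfulness of $\St_{\bar Q}^G$ over $A_n$ (Corollary~\ref{coro:StFF}), so that the specific comparison $\iota_n^{-1}\circ(\iota_{n+1}\otimes A_n)$ descends to the required $\beta_n$; your ``main obstacle'' paragraph gestures at this, but it is worth naming the tool precisely. For injectivity you invoke Proposition~\ref{prop:StIFF} directly, whereas the paper (implicitly, via the remark on locally $Z_M$-finite $\bar\sigma$) passes through the artinian levels as in the proof of Proposition~\ref{prop:StDef}; both routes are fine and yield the same conclusion.
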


\begin{proof}
This is a formal consequence of Proposition \ref{prop:StDef} (see the proofs of \cite[Lemma 3.14 and Theorem 3.15]{HSS}).
As in the proof of Proposition \ref{prop:StDef}, we only need to assume that $\bar\sigma$ is locally $Z_M$-finite for the injectivity.
\end{proof}

Let $\bar\pi$ be a smooth $k$-representation of $G$ such that $\Def_{\bar\pi}$ is pro-representable.
By \cite[Corollary 3.9]{Schm}, which holds true for any locally profinite group $G$, $R_{\bar\pi} \in \Noe(\Oc)$ if and only if $\dim_k \Ext_G^1(\bar\pi,\bar\pi) < \infty$.
In this case, \eqref{pro-rep} extends to all $A \in \Noe(\Oc)$ and there exists a universal deformation $(\pi^\univ,\phi^\univ)$ of $\bar\pi$ over $R_{\bar\pi}^\univ$ so that \eqref{pro-rep} is induced by base change:
\begin{equation*}
(R_{\bar\pi}^\univ \to A) \mapsto (\pi^\univ \otimesh_{R_{\bar\pi}^\univ} A,\phi),
\end{equation*}
where $\phi : \pi^\univ \otimesh_{R_{\bar\pi}^\univ} A \twoheadrightarrow \bar\pi$ is the morphism induced by $\phi^\univ$.

\begin{coro} \label{coro:Noe}
Let $\bar\sigma$ be a smooth $k$-representation of $L$ trivial on $L \cap \GU$ and set $\bar\pi \coloneqq \St_{\bar Q}^G(\bar\sigma)$.
Assume that $\Def_{\bar\sigma}$ is pro-representable (e.g.\@ $\End_L(\bar\sigma)=k$) and $R_{\bar\sigma} \in \Noe(\Oc)$ (i.e.\@ $\dim_k \Ext_L^1(\bar\sigma,\bar\sigma) < \infty$).
If $\bar\sigma$ is admissible, then $\Def_{\bar\pi}$ is also pro-representable and there is an isomorphism $R_{\bar\pi}^\univ \simeq R_{\bar\sigma}^\univ$ in $\Noe(\Oc)$ via which $\pi^\univ=\St_{\bar Q}^G(\sigma^\univ)$.
\end{coro}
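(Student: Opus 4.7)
The plan is to combine Corollary \ref{coro:pro-rep} (which already handles pro-representability in $\Pro(\Oc)$), Proposition \ref{prop:Ext} (to transfer finiteness of $\Ext^1$), and Proposition \ref{prop:StIDef} (to recognise the universal deformation as $\St_{\bar Q}^G(\sigma^\univ)$). Roughly, the statement is a noetherian upgrade of Corollary \ref{coro:pro-rep} together with an identification of universal objects, and every ingredient is already on the table.

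First I would note that by Corollary \ref{coro:pro-rep}, $\Def_{\bar\pi}$ is pro-representable and there is an isomorphism $\alpha : R_{\bar\sigma}^\univ \iso R_{\bar\pi}^\univ$ in $\Pro(\Oc)$; this uses only the admissibility of $\bar\sigma$ and pro-representability of $\Def_{\bar\sigma}$. To promote this to $\Noe(\Oc)$ it suffices, by the cited criterion (\cite[Corollary 3.9]{Schm}), to check that $\dim_k \Ext_G^1(\bar\pi,\bar\pi) < \infty$. But Proposition \ref{prop:Ext} (applied with $R=k$ and $\sigma=\sigma'=\bar\sigma$, both admissible) provides a $k$-linear isomorphism
\begin{equation*}
\Ext_{L/(L \cap \GU)}^1(\bar\sigma,\bar\sigma) \iso \Ext_G^1(\bar\pi,\bar\pi),
\end{equation*}
and the left-hand side has finite $k$-dimension by hypothesis $R_{\bar\sigma}^\univ \in \Noe(\Oc)$ (and a cofinal check reduces $\Ext^1$ in $\Mod_{L/(L\cap\GU)}^\infty(k)$ to $\Ext^1$ in the ambient category). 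Hence $R_{\bar\pi}^\univ \in \Noe(\Oc)$ and $\alpha$ is an isomorphism in $\Noe(\Oc)$.

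Next I would identify the universal deformation. By Proposition \ref{prop:StIDef}, the natural transformation \eqref{StDef} is an isomorphism of functors $\Def_{\bar\sigma} \iso \Def_{\bar\pi}$ on all of $\Noe(\Oc)$ (the admissibility of $\bar\sigma$ is what makes that statement apply). Unwinding pro-representability, evaluating this isomorphism at $A = R_{\bar\sigma}^\univ$ on the identity element $\mathrm{id} \in \Hom_{\Noe(\Oc)}(R_{\bar\sigma}^\univ, R_{\bar\sigma}^\univ)$ identifies the universal deformation $\sigma^\univ$ with a deformation of $\bar\pi$ over $R_{\bar\sigma}^\univ$ which, by the very definition of $\St_{\bar Q}^G$ on deformations (compatibility with base change, as spelled out before Proposition \ref{prop:StDef} and extended to $\Noe(\Oc)$ before Proposition \ref{prop:StIDef}), is exactly $\St_{\bar Q}^G(\sigma^\univ)$. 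Since \eqref{StDef} is a natural isomorphism, $\St_{\bar Q}^G(\sigma^\univ)$ is universal among deformations of $\bar\pi$, so under the isomorphism $\alpha$ constructed above it must agree with the universal deformation $\pi^\univ$.

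I do not anticipate a serious obstacle: everything is formal once one has the content of Corollary \ref{coro:pro-rep}, the $\Ext^1$-isomorphism of Proposition \ref{prop:Ext}, and the functorial bijection of Proposition \ref{prop:StIDef}. The only subtlety worth being careful about is checking that the isomorphism $\alpha$ in $\Pro(\Oc)$ provided by Corollary \ref{coro:pro-rep} is the \emph{same} one induced by Proposition \ref{prop:StIDef} — that is, that it sends $\sigma^\univ$ to $\St_{\bar Q}^G(\sigma^\univ)$ — but this is built into the construction, since both Corollary \ref{coro:pro-rep} and Proposition \ref{prop:StIDef} come from the single natural transformation \eqref{StDef}.
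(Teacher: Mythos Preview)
Your proposal is correct and follows the same route the paper leaves implicit (the corollary is stated without proof, as a direct consequence of Corollary~\ref{coro:pro-rep} and Proposition~\ref{prop:StIDef}). One small redundancy: once Corollary~\ref{coro:pro-rep} gives an isomorphism $R_{\bar\pi}^\univ \simeq R_{\bar\sigma}^\univ$ in $\Pro(\Oc)$, noetherianity of $R_{\bar\pi}^\univ$ is immediate from the hypothesis $R_{\bar\sigma}^\univ \in \Noe(\Oc)$, so the appeal to Proposition~\ref{prop:Ext} and the $\Ext^1$-criterion, while correct, is not needed.
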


\section{The case of a character}

Let $\Lambda$ be the Iwasawa algebra over $\Oc$ of the pro-$p$ completion of the abelianisation of $G$, i.e.
\begin{equation*}
\Lambda \coloneqq \varprojlim_H \Oc[G/H]
\end{equation*}
where $H$ runs among the open normal subgroups of $G$ such that $G/H$ is an abelian $p$-group.
We let $\lambda : G \to \Lambda^\times$ denote the natural continuous group homomorphism.
By \cite[Proposition 19.7]{pLG}, we have $\Lambda \in \Pro(\Oc)$.
Moreover,
\begin{equation*}
\Lambda \in \Noe(\Oc) \Leftrightarrow \dim_{\F_p} \Hom_\Grp^\cont(G,\F_p) < \infty.
\end{equation*}
In particular, $\Lambda \in \Noe(\Oc)$ when $\car(F)=0$ by \cite[Proposition 3.12]{Schm}.

\begin{exem}
Assume $G=\GL_n(F)$ so that $\det$ induces an isomorphism of topological groups $G^\ab \iso F^\times$.
Using \cite[II Proposition 5.7]{Neu}, we deduce the following.
\begin{itemize}
\item If $\car(F)=0$, then $\Lambda \simeq \Oc[\mu_{p^\infty}(F)][\![X_1,\dots,X_d]\!] \in \Noe(\Oc)$ where $\mu_{p^\infty}(F)$ is the group of $p$-power roots of unity in $F$ and $d=[F:\Q_p]+1$.
\item If $\car(F)=p$, then $\Lambda \simeq \Oc[\![X_1,X_2,X_3,\dots]\!]$ is the algebra of formal power series in countably infinitely many indeterminates with coefficients in $\Oc$ (in the sense of \cite[IV §~4]{BbkA47-en}), which is not noetherian.
\end{itemize}
\end{exem}

Let $\bar\chi : G \to k^\times$ be a smooth character.
We let $\hat{\bar{\chi}} : G \to \Oc^\times$ denote the continuous character obtained by composing $\bar\chi$ with the canonical lifting $k^\times \hookrightarrow \Oc^\times$.
We define a continuous character $\chi^\univ : G \to \Lambda^\times$ by setting
\begin{equation*}
\chi^\univ(g) \coloneqq \hat{\bar{\chi}}(g) \lambda(g)
\end{equation*}
for all $g \in G$.
Proceeding as in the proof of \cite[Proposition 3.11]{Schm} (here $G^\ab$ need not be topologically finitely generated because $H$ is required to be open in the definition of $\Lambda$), we see that $\Lambda$ is the universal deformation ring of $\bar\chi$ and $\chi^\univ$ is the universal deformation of $\bar\chi$.

\begin{coro} \label{coro:Lambda}
The universal deformation ring of $\St_{\bar Q}^G(\bar\chi)$ is $\Lambda$.
If $\Lambda \in \Noe(\Oc)$ (e.g.\@ if $\car(F)=0$), then the universal deformation of $\St_{\bar Q}^G(\bar\chi)$ is $\St_{\bar Q}^G(\chi^\univ)$.
\end{coro}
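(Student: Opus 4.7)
The plan is to combine Corollary \ref{coro:pro-rep} and Corollary \ref{coro:Noe} with the identification of the universal deformation ring of $\bar\chi$ as a character of $G$ (established in the discussion preceding the corollary) with that of $\bar\chi|_L$ as a character of $L/(L \cap \GU)$.

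First, I would verify the hypotheses of Corollary \ref{coro:pro-rep} applied to $\bar\sigma \coloneqq \bar\chi|_L$. The restriction $\bar\chi|_L$ is a one-dimensional and thus admissible smooth $k$-representation of $L$, and it is trivial on $L \cap \GU$: indeed, $\GU$ is perfect by Lemma \ref{lemm:perf}, so any character of $G$ kills $\GU$, and in particular $\bar\chi|_L$ kills $L \cap \GU$. Moreover $\End_{L/(L \cap \GU)}(\bar\chi|_L) = k$, so $\Def_{\bar\chi|_L}$ is pro-representable.

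The key step is then to identify $\Def_{\bar\chi|_L}$ with $\Def_{\bar\chi}$ (viewed as a character of $G$) as functors $\Art(\Oc) \to \Set$. Since $L\GU = G$ with $\GU$ normal in $G$, the second isomorphism theorem gives $G/\GU \iso L/(L \cap \GU)$. Any lift $(\pi,\phi)$ of $\bar\chi$ over $A \in \Art(\Oc)$ is free of rank one, hence the $A$-linear action is a character $G \to A^\times$; since $A^\times$ is abelian and $\GU$ is perfect, this character is trivial on $\GU$, so it factors through $G/\GU \simeq L/(L \cap \GU)$. Conversely, any lift over $L/(L \cap \GU)$ pulls back to $G$ via the quotient map. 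This yields a natural isomorphism $\Def_{\bar\chi} \iso \Def_{\bar\chi|_L}$ under which $\chi^\univ$ corresponds to the universal deformation of $\bar\chi|_L$.

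Finally, applying Corollary \ref{coro:pro-rep} gives $R^\univ_{\St_{\bar Q}^G(\bar\chi)} \simeq R^\univ_{\bar\chi|_L} \simeq \Lambda$ in $\Pro(\Oc)$. When $\Lambda \in \Noe(\Oc)$, Corollary \ref{coro:Noe} provides a universal deformation $\pi^\univ = \St_{\bar Q}^G(\sigma^\univ)$, and by the identification above $\sigma^\univ$ is precisely $\chi^\univ$ regarded as a character of $L/(L \cap \GU)$, whence $\pi^\univ = \St_{\bar Q}^G(\chi^\univ)$. The only subtlety to handle carefully is the identification of the two deformation functors, which rests entirely on the perfectness of $\GU$ (ensuring rank-one lifts are automatically trivial on $\GU$) and on the decomposition $G = L\GU$.
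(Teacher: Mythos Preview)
Your argument is correct and matches the intended derivation: the paper states Corollary~\ref{coro:Lambda} without proof, expecting it to follow from Corollaries~\ref{coro:pro-rep} and~\ref{coro:Noe} together with the identification of $\Lambda$ as the universal deformation ring of $\bar\chi$. The step you make explicit---identifying $\Def_{\bar\chi}$ (as a $G$-character) with $\Def_{\bar\chi|_L}$ (as a character of $L/(L\cap\GU)$) via the isomorphism $G/\GU \iso L/(L\cap\GU)$ and the perfectness of $\GU$---is exactly the bridge needed and is left implicit in the paper; equivalently, since $\GU$ is perfect one has $(L/(L\cap\GU))^{\ab}=G^{\ab}$, so the Iwasawa algebra $\Lambda$ built from $G^{\ab}$ also pro-represents deformations of $\bar\chi|_L$.
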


\bibliographystyle{amsalpha}
\bibliography{steinberg}

\end{document}